\newcounter{list}
\newcommand{\listitem}{\stepcounter{list}\thelist.}
\newcommand{\B}[1]{\mathcal{B}_{#1}}
\newcommand{\TL}{\mathcal{T}\mathcal{L}}
\theoremstyle{definition}
\newtheorem{theorem}{Theorem}
\newtheorem{definition}{Definition}
\newtheorem{assumption}{Assumption}
\newtheorem{corollary}{Corollary}
\newtheorem{lemma}{Lemma}
\newcommand{\BigO}[1]{\mathcal{O}(#1)}
\newcommand{\Omeg}[1]{\Omega(#1)}
\title{Factorizing the Brauer monoid in polynomial time}
\author[1]{Daniele Marchei}
\author[1]{Emanuela Merelli}
\author[2]{Andrew Francis}
\affil[1]{Computer Science, University of Camerino, Camerino, Italy}
\affil[2]{Centre for Research in Mathematics and Data Science, Western Sydney University, Australia}
\begin{document}
\maketitle

\begin{abstract}

    Finding a minimal factorization for a generic semigroup can be done by using the Froidure-Pin Algorithm, which is not feasible for semigroups of large sizes. On the other hand, if we restrict our attention to just a particular semigroup, we could leverage its structure to obtain a much faster algorithm. In particular, $\BigO{N^2}$ algorithms are known for factorizing the Symmetric group $S_N$ and the Temperley-Lieb monoid $\TL_N$, but none for their superset the Brauer monoid $\B{N}$.
    In this paper we hence propose a $\BigO{N^4}$ factorization algorithm for $\B{N}$. At each iteration, the algorithm rewrites the input $X \in \B{N}$ as $X = X' \circ p_i$ such that $\ell(X') = \ell(X) - 1$, where $p_i$ is a factor for $X$ and $\ell$ is a length function that returns the minimal number of factors needed to generate $X$.
\end{abstract}

\section{Introduction} \label{sec:introduction}
The Brauer monoid $\B{N}$ is a diagram algebra introduced by Brauer in 1937~\cite{brauer1937algebras}, and later utilized by Kauffman and Magarshak in 1995 to define a mapping between RNA secondary structures and elements of $\B{N}$ (which they call ``tangles'') \cite{kauffman1995vassiliev}. The correspondence between properties of RNA secondary structures, and properties of the factorizations of the tangles they get mapped to, can then be studied~\cite{marchei2022rna}. But for this, factorization of Brauer tangles is necessary, and this is the major topic of the present paper.

It can be easily proven that if there exists a length function for $\B{N}$ (i.e. a function that returns the minimal amount of factors the input tangle is generated by) computable in polynomial time, then there exists a polynomial time factorization algorithm (see \Cref{cor:naiveAlgoTimeComplexity}). In this paper we will find two length functions computable in quadratic time, one of which is computable in linear time if we assume some precomputation steps are already done. The key insight is that, since the length function for tangles in the symmetric group $S_N$ (which is a submonoid of $\B{N}$) just involves counting the number of crossings they have, it would be interesting to map every tangle in $\B{N}$ with length $l$ to a tangle in $S_N$ with exactly $l$ crossings. In this way, if this mapping could be done in polynomial time, then we would have found our length function.

The paper is divided in the following way. In \Cref{sec:relatedWorks} we begin by recalling some related work. \Cref{sec:preliminaries} is dedicated to giving some preliminaries on the Brauer monoid, setting up the problem, illustrating a naive $\BigO{N^5}$ algorithm and we outline two assumptions we are basing some proofs on. In \Cref{sec:tauMapping} we go into more detail on the idea of defining a mapping $\tau : \B{N} \rightarrow S_N$, and some of its properties. \Cref{sec:passThrough} will focus on just proving one theorem: i.e. that some edges in a tangle ``cannot come back'' if they satisfy a particular property. This will be the base for \Cref{sec:nodePolarity}, in which we finally outline a quadratic time algorithm for $\tau$. In \Cref{sec:length_functions} we will propose the two length functions that will then be used in \Cref{sec:finalAlgo} to illustrate the final factorization algorithm with $\BigO{N^4}$ time complexity.

\section{Related works} \label{sec:relatedWorks}

In a previous work~\cite{marchei2022rna} we proposed a factorization algorithm that uses a set of polynomial-time heuristics for finding a possibly non-minimal factorization, we then refine it by applying the axioms for the Brauer monoid as a Term Rewriting System (TRS). This will eventually ensure minimality, but since the TRS is not confluent the overall time complexity is difficult to calculate and likely to be huge. This is because for the symmetric group, it is known~\cite{stanley1984number} that the maximal number of minimal factorizations is
\begin{equation*}
    \frac{{N \choose 2}!}{1^{N-1}3^{N-2}\cdots(2N-3)^1}
\end{equation*}
and the TRS will have to check all of them before deciding that there are no more reductions possible.

The Froidure-Pin Algorithm can find the minimal factorization for any element in a finite semigroup $S$ by performing $|S| + |A| - |G| - 1$ operations, where $A$ is the set of axioms of $S$, and $G$ the set of generators~\cite{froidure1997algorithms}. For the Brauer monoid the resulting time complexity lower-bound is therefore $\Omeg{|\B{N}|} = \Omeg{(2N-1)!!}$\footnote{$(2N-1)!!$ is the ``odd double factorial'', defined as $(2N-1)!! = 1\cdot3\cdots(2N-3)\cdot (2N-1)$.}.
    
Algorithm 13 of ``Computing with semigroups'' \cite{east2019computing} is capable of finding a factorization, but it is not guaranteed to be minimal. It computes two sets $\mathfrak{R}$ (the $\mathcal{R}$-classes of $\B{N}$) and $(\B{N})\lambda$. 
The size of $\mathfrak{R}$ for $\B{N}$ can be calculated by the following recurrence relation:
\begin{equation*}
\begin{array}{lll}
    a(0) &=& 1 \\
    a(1) &=& 1 \\
    a(N) &=& a(N-1) + (N-1)a(N-2)
\end{array}
\end{equation*}
which is the number of ways to partition a set of size $N$ into subsets of size one or two \cite{dolinka2017motzkin}. This recurrent relation is clearly bounded below by $N!$. 

The authors also say that for regular semigroups (as in the case for $\B{N}$), $|\mathfrak{R}| = |(\B{N})\lambda|$, and that the calculation for $\mathfrak{R}$ is redundant. This does not reduce the time complexity because $(\B{N})\lambda$ still needs to be computed.

Lastly, two submonoids of $\B{N}$ can be factorized in quadratic time. The symmetric group can be factorized by using the \textsc{BubbleSort} algorithm, and the Temperley-Lieb monoid can be factorized by using the algorithm proposed by Ernst et al.~\cite{ernst2016factorization}. We will discuss these two algorithms in \Cref{sec:app_fact}.

\section{Preliminaries} \label{sec:preliminaries}

Given $N \geq 0$, arrange $2N$ nodes in two rows of $N$ nodes each. Nodes in the upper row are labelled with $[N] = \{1,2,\cdots,N\}$ while nodes in the bottom row are labelled with $[N'] = \{1',2',\cdots,N'\}$. A tangle is a set of $N$ edges connecting any two distinct nodes in $[N] \cup [N']$ such that each node is in exactly one edge. We will represent edges as $e = (x,y)$ in a canonical form in which $x < y$ if both $x$ and $y$ are nodes in the same row, while in the case that $e$ connects nodes from in different rows, $x \in [N]$ and $y \in [N']$.

Given two tangles $X$ and $Y$ we define their composition $X \circ Y$ by stacking $X$ on top of $Y$ (matching the bottom row of $X$ with the top row of $Y$) and then tracing the path of each edge (we will ignore internal loops in our setup). The set of all tangles on $2N$ nodes under composition is called the Brauer monoid $\B{N}$~\cite{brauer1937algebras} and the identity tangle is $I_N = (1,1')(2,2') \cdots (i,i') \cdots (N,N')$ (see \Cref{fig:tangle_example}).

\begin{figure}[ht]
    \centering
    \includegraphics{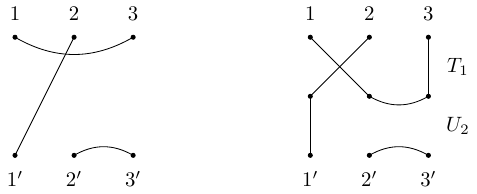}
    \caption{On the left a tangle $X = (1,3)(2,1')(2',3')$ in $\B{3}$, on the right its unique minimal factorization $T_1 \circ U_2$.
    }
    \label{fig:tangle_example}
\end{figure}

For our purposes, it will be useful to classify edges by where they are connected. Tangles have two types of edges~\cite{dolinka2018twisted}:
\begin{itemize}
    \item \textit{hooks} are edges in which both nodes are on the top or on the bottom row. The former ones are called \textit{upper hooks} and the latter \textit{lower hooks};

    \item \textit{transversals} are edges in which one node is on the top row while the other one is on the bottom. We further classify transversal edges as:
    \begin{itemize}
        \item \textit{positive transversal} are in the form $x > y'$
        \item \textit{zero transversal} are in the form $x = y'$
        \item \textit{negative transversal} are in the form $x < y'$
    \end{itemize}
    where $x$ is the upper node and $y'$ is the lower node.
\end{itemize}
Given a tangle $X$, we say that two edges \emph{cross} if they intersect each other in the diagrammatic representation of $X$ (assuming the edges are drawn in a way that minimizes the number of crossings). The \emph{size} of an edge $e = (x, y)$ is defined as $|e| = |x - y|$ ($x$ and $y$ are arbitrary nodes). 

Given a tangle $X$ with an upper hook $h = (i, i+1)$ of size one and another distinct edge $e = (x,y)$, we say that we \emph{merge $h$ with $e$} by removing them from $X$ and connecting their respective nodes such that the newly added edges $e_1$ and $e_2$ do not cross (see \Cref{fig:mergeExamples}). In particular:
\begin{itemize}
    \item if $e$ is a upper hook such that $x < i$ and $i+1 < y$, then $e_1 = (x,i)$ and $e_2 = (i+1,y)$;
    \item if $e$ is a lower hook such that $x \leq i$ and $i+1 \leq y$, then $e_1 = (i,x)$ and $e_2 = (i+1,y)$;
    \item if $e$ is a negative transversal such that $x < i$ and $i+1 \leq y$, then $e_1 = (x,i)$ and $e_2 = (i+1,y)$;
    \item if $e$ is a positive transversal such that $x > i+1$ and $i \geq y$, then $e_1 = (i,y)$ and $e_2 = (i+1,x)$;
    \item for all other combinations, the merging of $h$ and $e$ is undefined, since it will not be useful for our use case (at the end of \Cref{sec:passThrough} it will become clear why).
\end{itemize} 

\begin{figure}[ht]
    \centering
    \newcommand{\scale}{0.85}
    \subfloat[]{\includegraphics[scale = \scale]{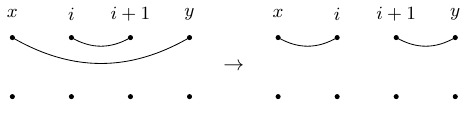}}\hfill
    \subfloat[]{\includegraphics[scale = \scale]{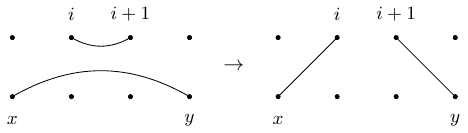}}
    
    \subfloat[]{\includegraphics[scale = \scale]{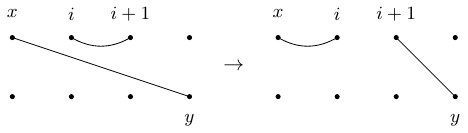}}\hfill
    \subfloat[]{\includegraphics[scale = \scale]{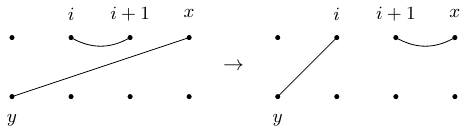}}
    \caption{The four cases for which we define the merge operation. In all cases, $e = (x,y)$ and $h = (i,i+1)$. (a) $e$ is an upper hook. (b) $e$ is a lower hook. (c) $e$ is a negative transversal. (d) $e$ is a positive transversal.}
    \label{fig:mergeExamples}
\end{figure}

The Brauer monoid can also be defined as the set of tangles generated by the composition of \emph{prime} tangles, which we call $T$-primes and $U$-primes, defined as:
\begin{itemize}
    \item $T_i = (1,1')(2,2') \cdots (i,i'+1)(i+1,i') \cdots (N,N')$;
    \item $U_i = (1,1')(2,2') \cdots (i,i+1)(i',i'+1) \cdots (N,N')$.
\end{itemize}
where $i$ is called the index of that prime tangle. These primes also satisfy the axioms in \Cref{tab:axioms}. We classify these axioms into three types:
\begin{itemize}
    \item \textit{Delete rules}: because they decrease the number of primes to be composed;
    \item \textit{Braid rules}: because they resemble the braid relation in the symmetric group;
    \item \textit{Swap rules}: because they allow to commute primes without changing the resulting tangle.
\end{itemize}

We call a word $F \in \{T_i, U_i\}^*$ a \textit{factorization} (the empty word coincides with the identity tangle $I_N$). If this factorization is reduced, i.e. there is no other equivalent word of shorter length, then it is called \textit{minimal}.

Define $\otimes : \B{N} \times \B{M} \rightarrow \B{N+M}$ to be the tensor product such that $Z = X \otimes Y$ is the tangle in which $Y$ is placed on the right of $X$. We call $X$ and $Y$ the components of $Z$~\cite{ram1995characters}. If we assume $F_X$ and $F_Y$ to be minimal factorizations for $X$ and $Y$, then we have that $F_Z = F_X \circ F_Y$ is a minimal factorization for $Z$, therefore we can factorize each component separately and, for the rest of the paper, we will assume that every tangle has only one component.

\begin{table}
    \centering
    \begin{tabular}{llcl}
    Delete &&&\\
        \listitem & $T_i \circ T_i $&=&$ I_N$\\
        \listitem & $U_i \circ U_i $&=&$ U_i$\\
        \listitem & $T_i \circ U_i $&=&$ U_i$\\
        \listitem & $U_i \circ T_i $&=&$ U_i$\\
        \listitem & $U_i \circ U_j \circ U_i$&=&$U_i \iff |i-j| = 1$\\
        \listitem & $U_i \circ T_j \circ U_i$&=&$U_i \iff |i-j| = 1$\\
        \listitem & $T_i \circ U_j \circ U_i$&=&$T_j \circ U_i \iff |i-j| = 1$\\
        \listitem & $U_i \circ U_j \circ T_i$&=&$U_i \circ T_j \iff |i-j| = 1$\\
        \listitem & $U_i \circ T_j \circ T_i$&=&$U_i \circ U_j \iff |i-j| = 1$\\
        \listitem & $T_i \circ T_j \circ U_i$&=&$U_j \circ U_i \iff |i-j| = 1$\\
    Braid &&&\\
        \listitem & $T_i \circ T_j \circ T_i$ &=&$ T_j \circ T_i \circ T_j \iff |i-j| = 1$\\
        \listitem & $T_i \circ U_j \circ T_i$ &=&$ T_j \circ U_i \circ T_j \iff |i-j| = 1$\\
    Swap &&&\\
        \listitem & $T_i \circ T_j$ &=&$ T_j \circ T_i \iff |i-j| > 1$\\
        \listitem & $T_i \circ U_j$ &=&$ U_j \circ T_i \iff |i-j| > 1$\\
        \listitem & $U_i \circ T_j$ &=&$ T_j \circ U_i \iff |i-j| > 1$\\
        \listitem & $U_i \circ U_j$ &=&$ U_j \circ U_i \iff |i-j| > 1$\\
    \end{tabular}
\caption{Axioms for the Brauer monoid. Axioms from 1 to 10 are called \textit{delete rules}, 11 and 12 are \textit{braid rules} and 13-16 are \textit{swap rules}.}
\label{tab:axioms}
\end{table}

A length function $\ell(X) : \B{N} \rightarrow \mathbb{N}$ is a function that returns the minimal number of prime factors required to compose the tangle $X$, in other words, it returns the length of a minimal factorization for $X$ (define $\ell(I_N) = 0$). If $\ell(X)$ can be computed in polynomial time, say $t(N)$, then \Cref{algo:naiveFactorization} returns a minimal factorization in $\BigO{N^{3}t(N)}$ (see \Cref{cor:naiveAlgoTimeComplexity} for the proof).

\begin{algorithm}[ht]
\caption{Finds the minimal factorization for any tangle in $\B{N}$.}
\label{algo:naiveFactorization}
\begin{algorithmic}
\Require {$X \in \B{N}$}
\Function{Factorize $\B{N}$}{$X$}
    \State $l \gets \ell(X)$
    \State $F \gets$ empty list
    \While {$l \neq 0$}
        
    \If{$(i,i+1) \in X$}
        \State $h \gets (i,i+1)$
        \For{$e \in X, e \neq h$}
            \State $X' \gets $ merge $h$ with $e$ in $X$ (if defined)
            \If{$\ell(X') = l - 1$}
                \State $X \gets X'$
                \State Append $U_i$ to $F$
                \State \textbf{break}
            \EndIf
        \EndFor
    \Else
        \For{$i \in [1 \dots N-1]$}
            \State $X' \gets T_i \circ X$
            \If{$\ell(X') = l - 1$}
                \State $X \gets X'$
                \State Append $T_i$ to $F$
                \State \textbf{break}
            \EndIf
        \EndFor
    \EndIf
    \State $l \gets l - 1$
    \EndWhile
    \State \textbf{return} $F$
\EndFunction
\end{algorithmic}
\end{algorithm}

Lastly, we list two assumptions we believe are true but were not able to prove. They will be useful for some proofs in the following Sections.

\begin{assumption} \label{ass:reductionInBN}
Every factorization in the Brauer monoid can be reduced to a minimal one by a sequence of ``delete'', ``braid'' and ``swap'' rules. In other words, we do not need to increase the factorization length in order to find a shorter one.
\end{assumption}

\begin{assumption} \label{ass:T_are_crossings}
    If a tangle $X$ has $k$ crossings, then there exists a minimal factorization $F$ with exactly $k$ $T$-primes and no other factorization with fewer $T$-primes exists. 
\end{assumption}

We empirically tested these assumptions, for the methodology we refer to \Cref{sec:testAssumptions}. 

\section{Mapping $\B{N}$ to $S_N$} \label{sec:tauMapping}

Tangles in the symmetric groups $S_N$ are generated only by $T$-primes. This implies that, given a tangle $X$ in $S_N$, calculating $\ell(X)$ will amount to just counting its number of crossings, which can be done in $\BigO{N^2}$ time\footnote{There is a faster approach that brings down the time complexity to $\BigO{N\log N}$~\cite{kleinberg2006algorithm}, but as we will see in \Cref{sec:nodePolarity}, for our purposes it will be much more useful to have the actual factorization for $X$, which cannot be done faster than $\BigO{N^2}$.}.
With this in mind, it would be useful to have a function $\tau : \B{N} \rightarrow S_N$ that maps tangles $X \in \B{N}$ to tangles in $S_N$ such that $\ell(X) = \ell(\tau(X))$. In this way, we could define $\ell(X)$ by just computing $\tau(X)$ and counting its number of crossings. Therefore if $\tau$ can be computed in polynomial time, then $\ell$ can be computed in polynomial time too. We now proceed to define $\tau$.

\begin{definition}
Given a factorization $F$ for a tangle $X \in \B{N}$, we define $\tau$ as the function that maps each prime factor $p_i$ of $F$ to $T_i$. In other words, $T_i\mapsto T_i$ and $U_i\mapsto T_i$ for each $T$ and $U$ prime in $F$.  
\end{definition}

\begin{theorem} \label{thm:tauMinimal}
    If $F$ is a minimal factorization, then $\tau(F)$ is minimal too.
\end{theorem}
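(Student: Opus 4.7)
The plan is to prove the contrapositive: if $\tau(F)$ is not reduced in $S_N$, then $F$ is not minimal in $\B{N}$. Suppose for contradiction that $F$ is minimal for $X$ but $\tau(F)$ admits a length-reducing sequence of symmetric-group rewrites---commutations $T_iT_j \leftrightarrow T_jT_i$ with $|i-j|>1$, braid moves $T_iT_jT_i \leftrightarrow T_jT_iT_j$ with $|i-j|=1$, and at least one cancellation $T_iT_i \to I$. Such a sequence exists by the standard reduction theory for the type-$A$ Coxeter group $S_N$ (equivalently, by \Cref{ass:reductionInBN} restricted to $S_N$). The strategy is to execute this reduction move-by-move on $F$ itself, using the Brauer monoid axioms of \Cref{tab:axioms}, and show that $F$ must strictly shorten at some step---producing a factorization of $X$ shorter than $F$ and contradicting minimality.

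The key observation is that each symmetric-group move lifts to $\B{N}$ in a way that either preserves $|F|$ or strictly decreases it, depending only on the $T/U$ types of the primes at the positions being rewritten. For commutations, the four swap rules (axioms 13--16) cover every possible $T/U$ pairing $p_ip_j$ with $|i-j|>1$ and leave $|F|$ unchanged. For cancellations $T_iT_i \to I$, the pair $p_ip_i$ in $F$ is one of $T_iT_i,\,T_iU_i,\,U_iT_i,\,U_iU_i$, handled respectively by axioms 1, 3, 4, 2, each strictly shortening $F$. For braid moves, the triple $p_ip_jp_i$ has eight possible $T/U$ configurations: the two ``symmetric'' ones $T_iT_jT_i$ and $T_iU_jT_i$ admit length-preserving braid rewrites via axioms 11 and 12, while the remaining six---$T_iT_jU_i,\,T_iU_jU_i,\,U_iT_jT_i,\,U_iT_jU_i,\,U_iU_jT_i,\,U_iU_jU_i$---are covered by the delete rules 10, 7, 9, 6, 8, 5 respectively, each strictly reducing $|F|$. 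Since any length-reducing $S_N$ rewrite must contain at least one cancellation, $F$ is guaranteed to shrink at some step of the mirrored rewrite, and we reach the desired contradiction.

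The main obstacle is essentially a bookkeeping verification: one must check that the axioms of \Cref{tab:axioms} exhaust the case analysis---four $p_ip_i$ patterns for cancellations, eight $p_ip_jp_i$ patterns with $|i-j|=1$ for braids, and four $p_ip_j$ patterns with $|i-j|>1$ for commutations. This is a finite algebraic check that matches the way the table is organized. A secondary subtlety worth flagging: after a lifted braid such as $T_iU_jT_i \to T_jU_iT_j$, the $T/U$ types at the three affected slots are permuted, but $\tau$ of the updated $F$ still equals the braided $S_N$ word, so the correspondence between the $S_N$ reduction and the $F$ rewrite remains coherent throughout.
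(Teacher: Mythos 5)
Your proof is correct and follows essentially the same route as the paper: lift the $S_N$ reduction of $\tau(F)$ (commutations and braid moves culminating in a cancellation $T_iT_i$, per Bj\"orner--Brenti) back to $F$ via the Brauer axioms and observe that $F$ must strictly shorten at some step, contradicting minimality. Your explicit enumeration of the eight $p_ip_jp_i$ braid preimages (axioms 11--12 for the two length-preserving cases, delete rules 5--10 for the remaining six) is in fact more careful than the paper's own wording, which tacitly treats every braid move as lifting to a length-preserving move on $F$ and only spells out this fuller case analysis later, in the proof of \Cref{thm:partialTau}.
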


\begin{proof}
    We prove this by contradiction (see \Cref{fig:thmTauMinimal}).

    Let $F$ be a minimal factorization and assume $\tau(F)$ is not minimal. Therefore it can be rewritten to contain the subword $T_i\circ T_i$ by using Axioms 11 and 13 (\cite{bjorner2005combinatorics}, Theorem 3.3.1). 

    Let $s = r_1r_2\dots r_{k-1}$ be a sequence of rewritings for $\tau(F)$ using Axioms 11 or 13 and $r_k$ be a rewriting step using Axiom 1. For each $r_i \in s$, there exists at least one rewriting $\bar{r}_i$ in the preimage $\tau^{-1}(r_i)$, meaning that $r_i$ applied to $\tau(F)$ maps to a rewriting step $\bar{r}_i \in \tau^{-1}(r_i)$, applied to $F$, that uses Axioms 11 to 16.

    This implies that there exists a sequence of rewritings $\bar{s} = \bar{r}_1\bar{r}_2\dots \bar{r}_{k-1}$ for $F$ such that, after $\bar{r}_{k-1}$, $F$ will contain one of the following subwords: $T_i \circ T_i$, $U_i \circ U_i$, $T_i \circ U_i$ or $U_i \circ T_i$. Therefore, $\bar{r}_k$ will use one of the Axioms from 13 to 16 to reduce $F$ to a shorter factorization.

    This implies that $F$ was not minimal, which is a contradiction.

\end{proof}

\begin{figure}
    \centering
    \includegraphics[scale = 1]{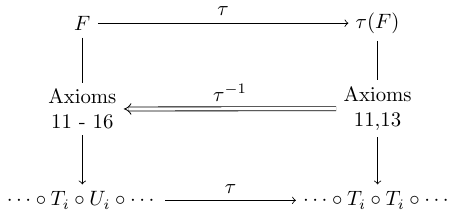}
    \caption{Diagram for the proof of \Cref{thm:tauMinimal}. Assume $F$ is minimal and $\tau(F)$ is not. Axioms 11 and 13 applied to $\tau(F)$ have preimage to axioms 11-16 applied to $F$. In the end, $\tau(F)$ will contain a $T_i \circ T_i$ because we assumed it was not minimal, which implies $F$ was not minimal too and thus we have a contradiction.}
    \label{fig:thmTauMinimal}
\end{figure}

\begin{corollary}
    Let $F$ be an arbitrary factorization. If $\tau(F)$ is not minimal, then $F$ is not minimal too.
\end{corollary}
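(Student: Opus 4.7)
The plan is to observe that this corollary is logically equivalent to the contrapositive of \Cref{thm:tauMinimal}, so no new work is required beyond invoking that theorem. \Cref{thm:tauMinimal} asserts the implication "$F$ minimal $\Rightarrow$ $\tau(F)$ minimal," which is universally quantified over factorizations $F$ of elements of $\B{N}$. Its contrapositive is precisely "$\tau(F)$ not minimal $\Rightarrow$ $F$ not minimal," which is the statement of the corollary.

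Concretely, the proof I would write is a single sentence: suppose for contradiction that $F$ is minimal; then by \Cref{thm:tauMinimal}, $\tau(F)$ would also be minimal, contradicting the hypothesis that $\tau(F)$ is not minimal. Hence $F$ cannot be minimal.

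There is no real obstacle here. The only thing worth double-checking is the quantification in \Cref{thm:tauMinimal}: the theorem is phrased for an arbitrary minimal factorization $F$ with no side conditions on $X = F$ as an element of $\B{N}$, so its contrapositive applies to any factorization, matching exactly the "arbitrary factorization" hypothesis of the corollary. No further structural properties of $\tau$, no additional rewriting argument, and no reuse of the diagram in \Cref{fig:thmTauMinimal} are needed.
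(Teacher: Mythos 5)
Your proof is correct and is exactly the paper's argument: the corollary is the contrapositive of \Cref{thm:tauMinimal}, and no additional work is needed.
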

\begin{proof}
    This is the contrapositive of \Cref{thm:tauMinimal}.
\end{proof}

\begin{corollary} \label{cor:FandTauFSameLength}
    Assuming $F$ to be minimal, then $|F| = |\tau(F)|$.
\end{corollary}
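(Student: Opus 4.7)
The plan is to observe that the equality $|F| = |\tau(F)|$ follows almost immediately from the definition of $\tau$ combined with \Cref{thm:tauMinimal}. By definition, $\tau$ operates letter-by-letter on the word $F$: each prime factor of $F$ (whether a $T_i$ or a $U_i$) is mapped to the single letter $T_i$ in $\tau(F)$. Hence, counted as words over the alphabet of primes, $\tau(F)$ and $F$ have the same number of letters.

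To upgrade this word-length equality into an equality of minimal factorization lengths, I would first invoke the hypothesis that $F$ is minimal: its word length equals $\ell(X)$, where $X \in \B{N}$ is the tangle that $F$ composes to. Second, by \Cref{thm:tauMinimal}, $\tau(F)$ is also a minimal factorization, so its word length equals $\ell(\tau(F))$, the length of the image tangle in $S_N$. Chaining these three equalities yields $|F| = \ell(X) = \ell(\tau(F)) = |\tau(F)|$.

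I do not anticipate any real obstacle here; the proof is a direct unfolding of the definition of $\tau$ and an immediate application of the previous theorem. The only point that requires care is being consistent about whether $|\cdot|$ denotes the length of the word or the length function evaluated on the underlying tangle. The minimality of $F$, together with \Cref{thm:tauMinimal} applied to $\tau(F)$, is precisely what forces these two interpretations to coincide on both sides of the equation.
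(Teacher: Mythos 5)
Your proof is correct and matches the paper's (implicit) reasoning: the paper states this corollary without proof, treating it as immediate from the fact that $\tau$ replaces each prime factor by exactly one $T$-prime together with \Cref{thm:tauMinimal}. Your careful separation of word length versus minimal factorization length is exactly the right way to make that implicit argument explicit.
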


\Cref{cor:FandTauFSameLength} allows us to determine the maximal length a minimal factorization in $\B{N}$ can have.

\begin{corollary} \label{cor:longestMinimalFact}
    The longest minimal factorization in $\B{N}$ has length $\frac{N(N-1)}{2}$.
\end{corollary}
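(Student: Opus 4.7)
The plan is to sandwich the longest minimal factorization length between two copies of $\binom{N}{2}$, leveraging the mapping $\tau$ to push everything over to the symmetric group $S_N$, where the longest element is well understood.

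For the upper bound, I would let $X \in \B{N}$ be arbitrary and let $F$ be a minimal factorization of $X$. \Cref{cor:FandTauFSameLength} gives $\ell(X) = |F| = |\tau(F)|$, and \Cref{thm:tauMinimal} guarantees that $\tau(F)$ is itself a minimal factorization in $S_N$ (of whatever permutation it composes to). Hence $\ell(X) = \ell(Y)$ for some $Y \in S_N$. It is a standard fact (see for instance \cite{bjorner2005combinatorics}) that the longest element of $S_N$, namely the reverse permutation $w_0$, has Coxeter length exactly $\binom{N}{2}$, so in particular $\ell(Y) \leq \binom{N}{2}$. This yields $\ell(X) \leq \binom{N}{2}$ for every $X \in \B{N}$.

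For achievability I would exhibit a tangle realising the bound, and the natural candidate is $w_0 \in S_N \subset \B{N}$ itself, the reverse permutation. It has exactly $\binom{N}{2}$ crossings, and the classical bubble-sort factorization (using only $T$-primes) gives a length-$\binom{N}{2}$ factorization in $\B{N}$, so $\ell(w_0) \leq \binom{N}{2}$. To show the bound is tight I would invoke \Cref{ass:T_are_crossings}: since $w_0$ has $\binom{N}{2}$ crossings, every factorization of $w_0$ must contain at least $\binom{N}{2}$ $T$-primes, hence has length at least $\binom{N}{2}$. Combining the two inequalities gives $\ell(w_0) = \binom{N}{2} = \frac{N(N-1)}{2}$, the claimed maximum.

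The only mildly delicate point is the lower bound $\ell(w_0) \geq \binom{N}{2}$: a priori one might worry that a clever factorization of $w_0$ in the larger monoid $\B{N}$ could be strictly shorter than any factorization inside $S_N$ by introducing and later cancelling $U$-primes. \Cref{ass:T_are_crossings} is precisely what rules this out, so under that assumption the whole argument collapses into two short steps and the corollary follows immediately from \Cref{thm:tauMinimal} and \Cref{cor:FandTauFSameLength}.
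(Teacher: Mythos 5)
Your proof is correct and follows essentially the same route as the paper: push a minimal factorization into $S_N$ via $\tau$ (using \Cref{thm:tauMinimal} and \Cref{cor:FandTauFSameLength}) and appeal to the fact that the longest element of $S_N$ has length $\binom{N}{2}$. Your explicit treatment of achievability --- ruling out a shorter $\B{N}$-factorization of the reverse permutation $w_0$ via \Cref{ass:T_are_crossings} --- is a point the paper's one-line proof leaves implicit, and is a genuine improvement; note it can even be done without invoking the assumption, since composing with any $U$-prime drops the number of transversals below $N$, so $w_0$ admits only $T$-prime factorizations and the Coxeter-length lower bound in $S_N$ applies directly.
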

\begin{proof}
    By \Cref{thm:tauMinimal} we know that every factorization $F$ has another factorization $\tau(F)$ with the same length composed only by $T$-primes. Therefore we only need to check the longest minimal factorization in $S_N$, which is well-known to be $\frac{N(N-1)}{2}$.
\end{proof}

\begin{corollary}\label{cor:naiveAlgoTimeComplexity}
    \Cref{algo:naiveFactorization} has time complexity $\BigO{N^{3}t(N)}$, where $t(N)$ is the time complexity for the length function $\ell$.
\end{corollary}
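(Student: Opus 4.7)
The plan is to decompose the total running time as (number of outer iterations) $\times$ (cost per iteration) and bound each factor separately. For the outer while loop, the counter $l$ is initialized to $\ell(X)$ and decreases by one at every iteration, so the loop executes exactly $\ell(X)$ times. By \Cref{cor:longestMinimalFact} we have $\ell(X) \leq N(N-1)/2$, giving an $\BigO{N^2}$ bound on the number of outer iterations.

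For the per-iteration cost, I would examine both branches of the conditional. Checking whether an edge of the form $(i,i+1)$ is present in $X$, and if so selecting it as $h$, takes $\BigO{N}$ time by scanning the edge list. In the first branch, the inner loop ranges over the at most $N$ edges $e \neq h$; for each candidate it builds the merged tangle $X'$ (constructible in $\BigO{N}$ by the merging rules in \Cref{sec:preliminaries}) and evaluates $\ell(X')$ at cost $t(N)$. In the second branch, the inner loop ranges over the $N-1$ values of $i$, and each iteration composes $T_i \circ X$ in $\BigO{N}$ time and then evaluates $\ell$ at cost $t(N)$. Either way, a single iteration of the outer loop costs $\BigO{N \cdot (N + t(N))}$, which under the mild assumption $t(N) = \Omega(N)$ (any reasonable length function must at least read its $\BigO{N}$-sized input) simplifies to $\BigO{N \cdot t(N)}$.

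Multiplying the two bounds yields a total running time of $\BigO{N^2} \cdot \BigO{N \cdot t(N)} = \BigO{N^3 \cdot t(N)}$, as claimed. There is no serious obstacle here; the argument is a routine double-counting analysis, with the only slightly delicate step being the invocation of \Cref{cor:longestMinimalFact} to convert the length bound into a polynomial bound on the iteration count. The corollary takes the correctness of \Cref{algo:naiveFactorization} for granted (i.e.\ that on each iteration some admissible $X'$ with $\ell(X') = l-1$ exists), which is a separate concern not affecting the complexity count.
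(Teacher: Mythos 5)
Your proof is correct and follows essentially the same route as the paper's: bound the number of outer iterations by $\BigO{N^2}$ via \Cref{cor:longestMinimalFact}, bound the cost of one iteration by $\BigO{N\,t(N)}$ (an inner loop over $\BigO{N}$ candidates, each requiring one evaluation of $\ell$), and multiply. Your version is slightly more careful than the paper's (it treats both branches and the tangle-construction overhead explicitly), but there is no substantive difference in the argument.
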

\begin{proof}
    In the case in which there exists edge $h = (i, i+1)$ in $X$, \Cref{algo:naiveFactorization} iterates through all edges, merges them with $h$ and then calculates $\ell$ for each of the resulting tangle. The overall complexity for this case is therefore $\BigO{Nt(N)}$. By \Cref{cor:longestMinimalFact} we know that the longest factorization possible is quadratic and \Cref{algo:naiveFactorization} removes each of them one at a time. Therefore the time complexity for \Cref{algo:naiveFactorization} is $\BigO{N^3t(N)}$. 
\end{proof}

The function $\tau$ we defined can be generalized to operate on an arbitrary subword of a factorization $F$. 

\begin{definition}
Let $F$ be a factorization for a tangle $X \in \B{N}$, we define $\tau^*$ be the function that applies $\tau$ to only a subword of $F$.
\end{definition}

\begin{theorem} \label{thm:partialTau}
    If $F$ is a minimal factorization, then $\tau^*(F)$ is minimal too.
\end{theorem}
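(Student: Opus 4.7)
The plan is to extend the contradiction argument of \Cref{thm:tauMinimal}. Assume $F$ is minimal and yet $\tau^*(F)$ is not. By \Cref{ass:reductionInBN}, there exists a rewriting sequence $s = r_1 r_2 \cdots r_k$ that reduces $\tau^*(F)$ to something strictly shorter; I lift $s$ to a rewriting sequence $\bar{r}_1 \bar{r}_2 \cdots \bar{r}_k$ applied to $F$ whose cumulative effect is also length-reducing, contradicting the minimality of $F$.

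The enabling observation is a closure property of \Cref{tab:axioms} under the replacement of any $T$-prime by the corresponding $U$-prime. Since $\tau^*$ only converts $U$-primes into $T$-primes, any subword of $\tau^*(F)$ matching the LHS of some axiom corresponds, in $F$, to a word obtained from that LHS by possibly replacing one or more of its $T$-primes with $U$-primes. A direct case check on \Cref{tab:axioms} confirms that every such replacement is again the LHS of some axiom: the preimages of $T_iT_i$ (axiom 1) are the LHSs of axioms 1-4; the preimages of $T_iT_jT_i$ (axiom 11) are the LHSs of axioms 5-12; the preimages of $T_iT_j$ (axiom 13) are the LHSs of axioms 13-16; and preimages of axioms whose LHS already contains a $U$-prime are themselves such axioms. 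Moreover, inspection of the length effects shows that swap rules lift to swaps, delete rules lift to deletes, and braid rules lift to either braids or deletes, so no $\bar{r}_i$ ever increases length.

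The argument then proceeds by tracking a simple invariant: as long as $\bar{r}_1, \ldots, \bar{r}_{i}$ have all been length-preserving, the current state of $F$ remains a preimage of the current state of $\tau^*(F)$ in the sense above, so the next step is always well-defined. If at some intermediate step $i<k$ the chosen $\bar{r}_i$ turns out to be a delete rule (i.e.\ a braid of $\tau^*(F)$ must lift to a delete on $F$), then $F$ has already been shortened and the contradiction is immediate. Otherwise the invariant propagates to step $k-1$, and the length-reducing axiom $r_k$ lifts to a delete axiom $\bar{r}_k$ whose LHS is present in the current state of $F$; applying $\bar{r}_k$ yields a strictly shorter factorization of $F$. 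In either case $F$ is not minimal, a contradiction.

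The principal obstacle is the bookkeeping across the sequence: positions shift as intermediate swap rules permute primes, and the invariant breaks precisely when a braid rule on $\tau^*(F)$ is forced to lift to a delete rule on $F$. That break, however, is exactly the favourable case, so the entire argument closes uniformly thanks to the closure property of \Cref{tab:axioms} verified above.
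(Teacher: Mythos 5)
Your proposal is correct and follows essentially the same route as the paper's own proof: invoke \Cref{ass:reductionInBN} to obtain a delete/braid/swap reduction of $\tau^*(F)$, then lift it step by step to $F$ using the fact that every $T\mapsto U$ replacement of an axiom's left-hand side is again an axiom's left-hand side (the paper encodes this closure property as a Hasse diagram of $\tau^*$-preimages). Your explicit handling of the case where a braid on $\tau^*(F)$ is forced to lift to a delete on $F$ is a welcome clarification of a point the paper glosses over, but it does not change the underlying argument.
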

\begin{proof}
    The proof is similar to the one for \Cref{thm:tauMinimal} but it requires \Cref{ass:reductionInBN} because $\tau^*(F)$ can be any factorization in $\B{N}$, and every braid/swap $r$ applied to $\tau^*(F)$ has to correspond to a braid/swap in $F$ that is in the preimage of $\tau^*(r)$. There are also more ways in which $\tau^*(F)$ can be not minimal. See the following Hasse diagram:
    \begin{figure}[H]
        \centering
        \includegraphics{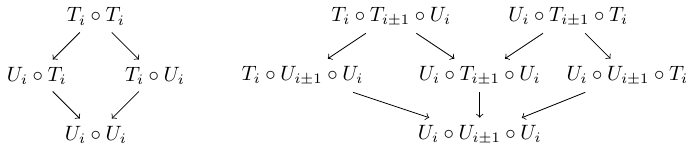}
        \label{fig:hasse_tau_preimage}
    \end{figure}
    The arrows represent the preimage of $\tau^*$, each element has also a self-loop that was not drawn. For the proof of \Cref{thm:tauMinimal} only the left Hasse Diagram was applicable, but now also the right one has to be taken into consideration. Since we assumed that $\tau^*(F)$ was not minimal, then by \Cref{ass:reductionInBN} it can be rewritten by a sequence of swaps and braids to contain any element in the above Hasse Diagram, but for all such elements there exists a preimage that must be in $F$ and, since all preimages are not minimal, it implies that $F$ was not minimal in the first place, hence we have a contradiction and $\tau^*(F)$ must have been minimal too.
\end{proof}

\begin{corollary} \label{cor:tauStarNotMinimalImpliesNotMinimal}
    Let $F$ be an arbitrary factorization. If $\tau^*(F)$ is not minimal, then $F$ is not minimal too.
\end{corollary}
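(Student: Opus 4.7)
The plan is to observe that this corollary is nothing more than the contrapositive of \Cref{thm:partialTau}, exactly as the earlier analogous corollary was obtained from \Cref{thm:tauMinimal}. \Cref{thm:partialTau} establishes the implication
\[
F \text{ minimal} \;\Longrightarrow\; \tau^*(F) \text{ minimal},
\]
so by contraposition I immediately get
\[
\tau^*(F) \text{ not minimal} \;\Longrightarrow\; F \text{ not minimal},
\]
which is precisely the statement of the corollary.

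Concretely, I would write a one-line proof: let $F$ be an arbitrary factorization with $\tau^*(F)$ non-minimal, and suppose for contradiction that $F$ were minimal; then \Cref{thm:partialTau} would force $\tau^*(F)$ to be minimal as well, contradicting the hypothesis. Hence $F$ is not minimal.

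There is no real obstacle, because all the substantive content, including the appeal to \Cref{ass:reductionInBN} and the Hasse-diagram analysis of preimages of swaps and braids under $\tau^*$, has already been carried out in the proof of \Cref{thm:partialTau}. The corollary is a purely logical consequence, and the proof proposal is therefore a single sentence invoking contraposition of the preceding theorem.
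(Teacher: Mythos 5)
Your proposal is correct and matches the paper's own proof exactly: the paper also disposes of this corollary in one line as the contrapositive of \Cref{thm:partialTau}. Nothing further is needed.
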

\begin{proof}
    This is the contrapositive of \Cref{thm:partialTau}.
\end{proof}

\Cref{cor:tauStarNotMinimalImpliesNotMinimal} will be very helpful in \Cref{sec:passThrough}, where we will use it to prove that some undesirable factorizations are always not minimal.

We can now use the above theorems to find a (not very useful) length function for $\B{N}$. Given a tangle $X \in \B{N}$, assume to have a minimal factorization $F$ for $X$. Now compute $\tau(F)$, which corresponds to another tangle $ \tau(X) \in S_N$. By \Cref{cor:FandTauFSameLength} we know that the number of crossings of $\tau(X) = |F|$ so we just count them and we obtain the number of factors for $X$. We can represent this mapping as the diagram in \Cref{fig:tauDiagram}.
\begin{figure}[H]
    \centering
    \includegraphics[scale = 1]{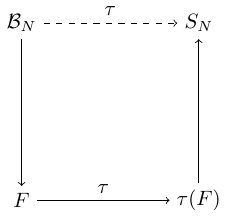}
    \caption{From a tangle in $\B{N}$ we obtain a minimal factorization $F$, we then compute $\tau(F)$ which corresponds to a tangle in $S_N$.}
    \label{fig:tauDiagram}
\end{figure}
Admittedly, this is not a very useful mapping because we are assuming to have $F$ in the first place (which is what we are ultimately looking for). What we would like to find now is a way to extend $\tau$ to arbitrary tangles, not just factorizations (the dashed arrow in \Cref{fig:tauDiagram}). In this way, if we find a polynomial time algorithm for finding a tangle $\tau(X) \in S_N$ such that its number of crossings is equal to $\ell(X)$, we would then have a polynomial time length function. We will present such an algorithm in \Cref{sec:nodePolarity}.

\section{Passing through and coming back} \label{sec:passThrough}

This Section is entirely dedicated to proving that if the number of $T$-primes an edge $e$ ``passes through'' is greater or equal to $|e|$, then it does not pass through any $U$-prime. This is a key property we will leverage in \Cref{sec:nodePolarity}, where at the end of it, we will have all the necessary components for computing $\tau(X)$ in polynomial time. In this Section we will also argue why the merge operation presented in \Cref{sec:preliminaries} is undefined for some edges.

\begin{definition}[Passing through]
    Given a factorization $F$ for a tangle $X$, a prime tangle $p_i \in F$ and an edge $e \in X$, we say that $e$ ``passes through'' $p_i$ if $e$ is connected to the node $i$ or $i'$ of $p_i$. We indicate with $\#T(e)$ and $\#U(e)$ the number of $T$-primes and $U$-primes respectively $e$ passes through in $F$.
\end{definition}

We will now define what ``coming back'' means. The goal for these definitions is to prove that if $\#U(e) > 0$ then $e$ cannot come back. This will imply that $\#U(e) > 0 \implies \#T(e) < |e|$ and therefore, by contraposition, $\#T(e) \geq |e| \implies \#U(e) = 0$, which is what we want to prove.

\begin{definition}[Coming back at $p$]
    Given a factorization $F$ for a tangle $X$, a prime tangle $p \in F$ and an edge $e \in X$, we say that $e$ ``comes back'' at $p$ if $|e|$ decreases when passing through $p$.
\end{definition}

\begin{definition}[Coming back in $F$]
    Given a factorization $F$ for a tangle $X$ and an edge $e \in X$, we say that $e$ ``comes back'' in $F$ if there exists a prime $p$ at which $e$ comes back.
\end{definition}

See \Cref{fig:passes_through_example} for an example.

\begin{figure}
        \centering
        \includegraphics[width = .5\textwidth]{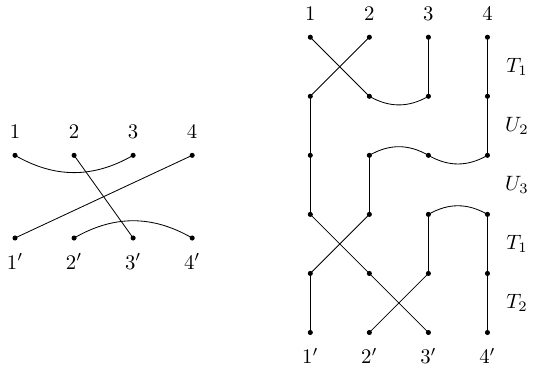}
        \caption{A tangle along with one of its minimal factorizations $ F = T_1 \circ U_2 \circ U_3 \circ T_1 \circ T_2$. The edge $(1,3)$ passes through $T_1$ and $U_2$, edge $(2,3')$ passes through two $T_1$s and one $T_2$, edge $(4,1')$ passes through $U_3, U_2$ and $T_1$ and edge $(2',4')$ passes through $T_2$ and $U_3$. Edge $(2,3')$ comes back at the bottom-most $T_1$ because it decreases in size, this also means that it comes back in $F$. Note also that $(2,3')$ is the only edge that passes through only $T$-primes and satisfies the property $\#T(e) \geq |e|$.}
        \label{fig:passes_through_example}
    \end{figure}

\begin{theorem} \label{thm:comesBackSumPrimes}
    Let $X \in \B{N}$, $e \in X$ and $F$ any minimal factorization for $X$. Then $e$ comes back in $F$ if and only if $\#T(e) + \#U(e) > |e|$.
\end{theorem}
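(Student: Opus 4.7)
My plan is to trace the edge $e$ as a continuous path through the stacked diagram of the factorization $F = p_1 \circ \cdots \circ p_n$, tracking its horizontal coordinate $h(t) \in \{1, \ldots, N\}$ along the path. The path starts at one endpoint of $e$, ends at the other, and between primes the coordinate $h(t)$ is constant. The key observation is that at every prime the path either sits on an identity strand and passes through with no change in $h$, or meets an active node at positions $j$ or $j+1$: for $T_j$ the path is swapped from one to the other, and for $U_j$ the path is reflected across the cap or cup, connecting position $j$ to position $j+1$. In both active cases the jump in $h$ is exactly $\pm 1$, so by the definition of \emph{passes through}, the total count $\#T(e) + \#U(e)$ equals the number $k$ of $\pm 1$ jumps made along the path.

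Next I consider the signed total of these $k$ jumps, which equals the net horizontal displacement $h(\text{end}) - h(\text{start})$. For a transversal $e = (x, y')$ the endpoints sit at horizontal positions $x$ and $y$ in opposite rows, and for a hook $e = (a, b)$ both endpoints sit at positions $a$ and $b$ on the same row, so in either case the net displacement has absolute value $|e|$. After fixing the orientation of the traversal so that this signed displacement is $+|e|$, a jump of sign $-1$ at some prime $p$ is equivalent to the horizontal offset of the path from its starting endpoint decreasing as the path passes through $p$, and this is exactly the definition of \emph{coming back at} $p$. Consequently $e$ comes back in $F$ iff at least one of the $k$ jumps has sign $-1$.

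With these two identifications in hand, the theorem reduces to an elementary identity on $\pm 1$ sequences. Writing $a_+$ and $a_-$ for the numbers of $+1$ and $-1$ jumps respectively, the two count identifications give $a_+ + a_- = \#T(e) + \#U(e)$ and $a_+ - a_- = |e|$; subtracting yields $\#T(e) + \#U(e) - |e| = 2 a_-$. Hence $\#T(e) + \#U(e) > |e|$ iff $a_- \geq 1$ iff $e$ comes back in $F$, proving both directions of the claim.

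The main obstacle I expect is the careful treatment of hook edges: for a hook the traced path reverses its vertical direction at some $U$-prime deep in the stack, so writing down a clean parameterisation and checking that the signed horizontal displacement really equals $+|e|$ rather than zero (since the path returns to the same row) requires choosing a consistent orientation on the arc and verifying the $\pm 1$ jump convention at $U$-primes on both the downward and upward segments. Transversal edges pose no such difficulty because their path is monotone in the vertical direction.
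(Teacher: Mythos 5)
Your overall strategy is sound and is essentially a sharpened, quantitative version of the paper's own argument: the paper also traces the strand of $e$ through the stack and counts unit steps of its horizontal position, but only argues informally that coming back forces one extra prime beyond the $|e|$ distinct ones needed; your identity $\#T(e)+\#U(e)-|e|=2a_-$ makes both directions fall out of one equation, which is genuinely cleaner. The identification of $\#T(e)+\#U(e)$ with the number of $\pm1$ jumps, and of the net signed displacement with $\pm|e|$ (including the hook case, where the arc must be oriented from one endpoint to the other), are both fine.

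However, one step is wrong as stated, and it is the step carrying the whole second paragraph: \emph{``a jump of sign $-1$ at some prime $p$ is equivalent to the horizontal offset of the path from its starting endpoint decreasing as the path passes through $p$.''} The offset is $|h(t)-h(0)|$, and a $-1$ jump decreases it only when the current signed displacement $d(t)=h(t)-h(0)$ is positive; if $d(t)\le 0$ a $-1$ jump \emph{increases} the offset, and a $+1$ jump taken at $d(t)<0$ \emph{decreases} it. The paper's own worked example (Figure with $F=T_1\circ U_2\circ U_3\circ T_1\circ T_2$, edge $(2,3')$) is a counterexample: the edge comes back at the lower $T_1$, where the jump is $+1$ (from position $1$ back to $2$), while the unique $-1$ jump occurs at the upper $T_1$, where the offset increases from $0$ to $1$. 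What you actually need, and what is true, is only the existential equivalence: $a_-\ge 1$ iff the offset decreases at some prime. Forward: at the first $-1$ jump either $d>0$ (offset drops there), or $d$ becomes negative, and since $d$ must end at $+|e|\ge 0$ some later jump carries it from $-1$ to $0$, dropping the offset from $1$ to $0$. Backward: an offset drop at a prime with $d>0$ is a $-1$ jump; an offset drop at a prime with $d<0$ presupposes an earlier $-1$ jump that made $d$ negative. With the per-prime claim replaced by this existential argument, your identity and the theorem follow; without the repair, the biconditional you feed into $\#T(e)+\#U(e)-|e|=2a_-$ is unjustified.
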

\begin{proof}
    Forward direction.

    Assume $e$ comes back in $F$, then there exists a prime $p_i$ at which $e$ comes back, but if it comes back at $p_i$ it means $e$ passed through another prime $p_i'$ with the same index $i$, and since every edge passes through at least $|e|$ distinct primes, it must be that $\#T(e) + \#U(e) > |e|$.

    The backward direction uses the same argument.

\end{proof}

We will now use \Cref{thm:comesBackSumPrimes} to prove that a tangle in $S_N$ has a pair of edges $e$ and $e_2$ as in \Cref{fig:thm5_and_6}a if and only if $e$ comes back. This theorem will allow us to ignore these tangles in the following theorems, because later on we will assume that a given edge does not come back.

\begin{theorem} \label{thm:existsEdgeThenComesBack}
    Given $X \in S_N$ with a negative transversal $e_1 = (x_1,y_1')$, if there exists another edge $e_2 = (x_2,y_2')$ such that $x_2 < x_1$ and $y_2' > y_1'$ then $e_1$ comes back in any factorization for $X$.
\end{theorem}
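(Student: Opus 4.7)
The plan is to reduce the statement to \Cref{thm:comesBackSumPrimes}. Since $X \in S_N$, by \Cref{ass:T_are_crossings} a minimal factorization $F$ of $X$ uses only $T$-primes (its length equals the crossing count of $X$, which equals the number of $T$-primes it contains), so $\#U(e_1) = 0$ in $F$ and it suffices to show $\#T(e_1) > |e_1| = y_1 - x_1$. In a minimal $T$-factorization of $X \in S_N$, each $T$-prime corresponds to one crossing in the wiring diagram of $X$ and $e_1$ passes through it iff $e_1$ participates in that crossing; hence $\#T(e_1)$ equals the number of edges of $X$ that cross $e_1$.

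I would then count those crossings via the permutation $\pi : [N] \to [N]$ defined by $(k, \pi(k)') \in X$. Two transversals $(k, \pi(k)')$ and $(x_1, y_1')$ cross precisely when $(k - x_1)(\pi(k) - y_1) < 0$, so with $A = \{k < x_1 : \pi(k) > y_1\}$ and $B = \{k > x_1 : \pi(k) < y_1\}$ we have $\#T(e_1) = |A| + |B|$. Bijectivity of $\pi$, together with $\pi(x_1) = y_1$, yields
\[
    x_1 - 1 = |\{k < x_1 : \pi(k) < y_1\}| + |A|, \qquad y_1 - 1 = |\{k < x_1 : \pi(k) < y_1\}| + |B|,
\]
by partitioning $\{1, \ldots, x_1 - 1\}$ and $\pi^{-1}(\{1, \ldots, y_1 - 1\})$ respectively. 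Subtracting gives $|B| = |A| + (y_1 - x_1)$, so $\#T(e_1) = 2|A| + |e_1|$.

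The hypothesis places $x_2$ in $A$ (since $x_2 < x_1$ and $\pi(x_2) = y_2 > y_1$), so $|A| \geq 1$ and $\#T(e_1) \geq |e_1| + 2 > |e_1|$; \Cref{thm:comesBackSumPrimes} then forces $e_1$ to come back in $F$. The only real step is the bijective bookkeeping, which falls out cleanly once $\pi(x_1) = y_1$ is excluded from both counts so the subtraction is tidy. Extending to an arbitrary factorization $F'$ is immediate from the same geometric principle: $e_2$ starts to the left of $e_1$ at the top and ends to the right at the bottom, so their trajectories must swap relative order at some prime in $F'$; at the first such swap $e_1$ is still on the right of $e_2$, so the swap shifts $e_1$ leftward, rendering its trajectory non-monotone and thereby producing a coming back.
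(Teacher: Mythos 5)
Your proof is correct and follows essentially the same route as the paper's: both arguments count the edges of $X$ that must cross $e_1$, show that the presence of $e_2$ forces this count above $|e_1|$, and conclude via \Cref{thm:comesBackSumPrimes}. Your bijective bookkeeping even yields the exact identity $\#T(e_1) = 2|A| + |e_1|$, slightly sharper than the paper's lower bound on right-to-left crossings, but the underlying idea is identical.
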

\begin{proof}
    Let's set some variables:
    \begin{equation*}    
    \begin{array}{rcl}
        a & = & x_2 - 1 \\
        b & = & x_1 - x_2 - 1 \\
        c & = & N - x_1 \\
        a' & = & y_1' - 1 \\
        b' & = & y_2' - y_1' - 1 \\
        c' & = & N - y_2'
    \end{array}
    \end{equation*}
    where $a, b$ and $c$ are the number of nodes with indices less than $x_2$, in between $x_2$ and $x_1$, and greater than $x_1$ respectively (same for $a', b'$ and $c'$). Therefore we have that 
    \begin{equation*}
        a + b + c = a' + b' + c'
    \end{equation*}
    implying that
    \begin{equation*}
        a' - a - b = c - b' - c'
    \end{equation*}
    which counts the least amount of edges that have to cross $e$ from right to left. Notice now that
    \begin{equation*}
        \begin{array}{ccl}
            x_1 &=& b + x_2 + 1 = b + a + 2 \\
            y_1' & = & a' + 1 
        \end{array}
    \end{equation*}
    and therefore
    \begin{equation*}    
        |e_1| = y_1' - x_1 = a' +1 - (b+a+2) = a' - a - b - 1        
    \end{equation*}
    By substitution, we can now obtain
    \begin{equation*}
        \begin{array}{ccc}
            a' - a - b & = & c - b' - c' \\
            |e_1| + 1 & = & c - b' - c' \\
            |e_1| & < & c - b' - c'
        \end{array}
    \end{equation*}
    which shows that the least amount of edges that have to cross $e_1$ is bigger than $|e_1|$, which implies that $\#T(e_1) > |e_1|$ and therefore that $e_1$ comes back in $F$ (\Cref{thm:comesBackSumPrimes}).

    Note that this proof is not dependent on the factorization chosen, therefore $e_1$ comes back in all minimal factorization for $X$.
\end{proof}

\begin{figure}
    \centering
    \subfloat[]{\includegraphics[width = 0.45\textwidth]{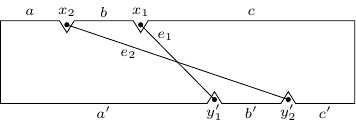}}\hfill
    \subfloat[]{\includegraphics[width = 0.45\textwidth]{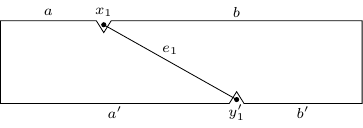}}

    \caption{(a) Illustration for \Cref{thm:existsEdgeThenComesBack}. If a tangle $X$ in $S_N$ contains an edge $e_1$ and an edge $e_2$ as shown, then $e_1$ will come back in all minimal factorizations for $X$. (b) Illustration for \Cref{thm:ComesBackThenExistsOtherEdge}. If $e_1$ comes back in a minimal factorization, then there exists another edge $e_2$ like in (a).}
    \label{fig:thm5_and_6}
\end{figure}

\begin{theorem} \label{thm:ComesBackThenExistsOtherEdge}
    Given $X \in S_N$ with a negative transversal $e_1 = (x_1,y_1')$ and minimal factorization $F$ for $X$, if $e_1$ comes back in $F$, then there exists another edge $e_2 = (x_2,y_2')$ such that $x_2 < x_1$ and $y_2' > y_1'$ (\Cref{fig:thm5_and_6}b).
\end{theorem}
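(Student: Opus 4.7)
The plan is to combine \Cref{thm:comesBackSumPrimes} with a direct counting argument on the endpoints of the edges of $X$, essentially reversing the algebraic bookkeeping used in \Cref{thm:existsEdgeThenComesBack}. First, since $X \in S_N$ the factorization $F$ consists entirely of $T$-primes, so $\#U(e_1) = 0$, and \Cref{thm:comesBackSumPrimes} together with the hypothesis that $e_1$ comes back in $F$ yields $\#T(e_1) > |e_1|$. I would next invoke the standard fact that in a minimal factorization of a permutation every pair of crossing edges crosses exactly once and each such crossing is realized by a unique $T$-prime through which both edges pass; hence $\#T(e_1)$ equals the number of edges of $X$ distinct from $e_1$ that cross $e_1$ in the diagram.

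Then I would partition the $N-1$ edges other than $e_1$ into four classes according to the position of their endpoints relative to $x_1$ (for the top node) and $y_1'$ (for the bottom node): let $A$ collect edges with top node $< x_1$ and bottom node $> y_1'$, $B$ collect edges with top $> x_1$ and bottom $< y_1'$, $C$ collect edges with top $< x_1$ and bottom $< y_1'$, and $D$ collect edges with top $> x_1$ and bottom $> y_1'$. The edges crossing $e_1$ are exactly those in $A \cup B$, so the observation above reads $\#T(e_1) = |A| + |B|$. Counting the remaining top nodes gives $|A| + |C| = x_1 - 1$ and $|B| + |D| = N - x_1$; counting the remaining bottom nodes gives $|B| + |C| = y_1' - 1$ and $|A| + |D| = N - y_1'$.

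The contradiction step is then short: assume $|A| = 0$. The first and fourth counting equations force $|C| = x_1 - 1$ and $|D| = N - y_1'$, and substituting into the remaining two equations yields $|B| = y_1' - x_1 = |e_1|$ (consistently from both). Hence $\#T(e_1) = |A| + |B| = |e_1|$, contradicting $\#T(e_1) > |e_1|$. Therefore $|A| \geq 1$, and any edge $e_2 \in A$ satisfies $x_2 < x_1$ and $y_2' > y_1'$, which is exactly the conclusion required.

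The main obstacle I anticipate is pinning down cleanly the identity $\#T(e_1) = |A| + |B|$, i.e.\ the claim that in a minimal factorization of a permutation $e_1$ passes through exactly one $T$-prime for each edge of $X$ that crosses it in the diagram. This is the classical property that reduced words in $S_N$ realize every inversion precisely once, and once it is stated it is exactly the fact that lets the counting in \Cref{thm:existsEdgeThenComesBack} be inverted. Everything else in the argument is an elementary pigeonhole on the four endpoint classes, which dovetails with the $a, b, c, a', b', c'$ setup of the previous theorem.
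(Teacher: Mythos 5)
Your proof is correct and follows essentially the same route as the paper's own: both start from \Cref{thm:comesBackSumPrimes} to get $\#T(e_1) > |e_1|$, identify $\#T(e_1)$ with the number of edges crossing $e_1$, and then count endpoints relative to $x_1$ and $y_1'$ to show that the absence of an edge $e_2$ with $x_2 < x_1$ and $y_2' > y_1'$ forces $\#T(e_1) = |e_1|$, a contradiction. Your four-class bookkeeping is just a more explicit version of the paper's node count, and your explicit appeal to the reduced-word fact that each inversion is realized by exactly one $T$-prime makes precise a step the paper leaves implicit.
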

\begin{proof}
    Since $e_1$ comes back in $F$, then $\#T(e_1) > |e_1|$ (\Cref{thm:comesBackSumPrimes}). Therefore $\#T(e_1) = |e_1| + c$ where $c \geq 1$.

    Let's set some variables:
    \begin{equation*}    
    \begin{array}{rcl}
        a & = & x_1 - 1 \\
        a' & = & y_1' - 1 = a + |e_1|
    \end{array}
    \end{equation*}
    
    Where $a$ is the number of nodes before $x_1$ and $a'$ is the number of nodes before $y'$.
    Suppose now that there is no edge $e_2 = (x_2,y_2')$ such that $x_2 < x_1$ and $y_2' > y_1'$, therefore we are assuming that all $|e_1| + c$ edges that cross $e_1$ connect nodes with indices greater than $x_1$ to nodes with indices less than $y_1'$.

    The number of nodes with indices less than $y_1'$ that do not cross $e$ is therefore
    \begin{equation*}
    \begin{array}{rcl}
        a' - (|e_1| + c) & = & \\
        a + |e_1| - |e_1| - c & = & \\
        a - c
    \end{array}
    \end{equation*}

    Since we know that $c \geq 1$, we have that $a - c < a$, which implies that not all nodes with indices less than $x_1$ can connect to lower nodes with indices less than $y_1'$. Therefore there must be another edge $e_2 = (x_2,y_2')$ that crosses $e_1$ with $x_2 < x_1$ and $y_2' > y_1'$.
    
\end{proof}

\begin{corollary} \label{cor:SN_comes_back_iff_other_edge}
    Given $X \in S_N$ with a negative transversal $e = (x_1,y_1')$, then $e$ comes back in every minimal factorization for $X$ if and only if there exists another edge $e_2 = (x_2,y_2')$ such that $x_2 < x_1$ and $y_2' > y_1'$.
\end{corollary}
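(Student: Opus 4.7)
The plan is to prove the corollary as a direct consequence of the two theorems just established, treating it as a packaging of \Cref{thm:existsEdgeThenComesBack} and \Cref{thm:ComesBackThenExistsOtherEdge} into a single biconditional. No new combinatorial argument is needed; the work is just to match the quantifiers correctly.

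For the backward direction, I would assume the existence of $e_2 = (x_2, y_2')$ with $x_2 < x_1$ and $y_2' > y_1'$. Then \Cref{thm:existsEdgeThenComesBack} immediately gives that $e$ comes back in every minimal factorization of $X$, since the statement of that theorem is factorization-independent (as its own proof notes in the closing remark). So this direction is essentially one line.

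For the forward direction, I would assume $e$ comes back in every minimal factorization. Since $X$ admits at least one minimal factorization $F$, $e$ in particular comes back in $F$, and then \Cref{thm:ComesBackThenExistsOtherEdge} produces the desired edge $e_2$. The main subtlety here is logical rather than mathematical: \Cref{thm:ComesBackThenExistsOtherEdge} is phrased for a single minimal factorization, but the corollary asks about \emph{every} minimal factorization in its hypothesis; the implication still goes through because we only need existence of the edge $e_2$, which depends on $X$ rather than on the factorization.

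I do not foresee any real obstacle: both directions are immediate once one notes that the property of $e_2$ existing is a purely combinatorial property of $X$, while ``coming back'' is a property relative to a factorization; the two theorems together show that these notions are in fact equivalent, so ``in some minimal factorization'' and ``in every minimal factorization'' collapse to the same condition. I would state this equivalence explicitly as a short remark so the reader sees why the $\forall/\exists$ mismatch between the two theorems is harmless.
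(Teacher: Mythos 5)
Your proposal is correct and matches the paper's intent exactly: the corollary is stated without proof precisely because it is the conjunction of \Cref{thm:existsEdgeThenComesBack} (backward direction, factorization-independent) and \Cref{thm:ComesBackThenExistsOtherEdge} (forward direction, applied to any one minimal factorization). Your remark on the harmless $\forall/\exists$ mismatch is a sensible addition but not a departure from the paper's reasoning.
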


In light of \Cref{cor:SN_comes_back_iff_other_edge}, from now on we will just say that $e$ does not come back in $X$, since it is independent of the factorization.

Assuming that we now have a tangle $X \in S_N$ with an edge $e$ that does not come back, we can always decompose it as $X = L \circ Z \circ R$ (see \Cref{fig:LZRDec_ex}). This decomposition will come in handy when we will have to prove that a particular factorization is always not minimal, and it will turn out that the two tangles $L$ and $R$ can be safely ignored, simplifying the proof.

\begin{theorem}[LZR Decomposition] \label{thm:LZRDecompositionTheorem}
    Let $a,b \geq 0$ be natural integers. Let $a', b' \geq 0 $ be natural integers such that $a' > a$ and $a + b = a' + b'$.  Let $X \in S_{a+b+1}$ be a tangle containing the edge $e = (a+1,a'+1)$ that does not come back. Then $X$ can be decomposed as a composition of three tangles $L, Z, R \in S_{a+b+1}$, where $L = I_{a+1} \otimes \sigma$ with $\sigma \in S_b$, $Z = T_{a+1} \circ T_{a+2} \circ \cdots T_{a+|e|}$ (containing the edge $e$) and $R = \omega \otimes I_{b' + 1}$ with $\omega \in S_{a'}$. This decomposition satisfies the property $\ell(X) = \ell(L) + \ell(Z) + \ell(R)$. In the special case in which $a'+1 = a+b+1$ then $L = I_N$.
\end{theorem}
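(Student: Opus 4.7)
\begin{proofsketch}
The plan is to exploit the non-coming-back hypothesis on $e$ via \Cref{cor:SN_comes_back_iff_other_edge}, partition the remaining edges of $X$ into three disjoint families, and then construct $\sigma$ and $\omega$ explicitly so that the routing $L \circ Z \circ R$ reproduces $X$ edge-by-edge while the lengths add.

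First I would apply \Cref{cor:SN_comes_back_iff_other_edge} to conclude that no edge of $X$ has top index less than $a+1$ and bottom index greater than $a'+1$. Together with the fact that every node lies on exactly one edge, this forces the $a+b$ edges other than $e$ to split into three sets: the \emph{left} set $A$ of edges with top in $\{1,\ldots,a\}$ (whose bottoms must then lie in $\{1,\ldots,a'\}$), the \emph{returning} set $B$ of edges with top in $\{a+2,\ldots,a+b+1\}$ and bottom in $\{1,\ldots,a'\}$, and the \emph{right} set $C$ of edges with top in $\{a+2,\ldots,a+b+1\}$ and bottom in $\{a'+2,\ldots,a+b+1\}$. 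A direct count using $a+b = a'+b'$ yields $|A|=a$, $|B|=|e|=a'-a$, and $|C|=b'$.

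Next I would build $L$, $Z$, $R$ explicitly. List the returning top nodes as $r_1 < \cdots < r_{|e|}$ with corresponding bottoms $y^r_1,\ldots,y^r_{|e|}$. Define $\sigma$, acting on positions $\{a+2,\ldots,a+b+1\}$, by sending each right-side top $x \in C$ to the bottom of its $C$-edge and sending $r_i$ to $a+1+i$ for each $i$. Define $\omega$, acting on positions $\{1,\ldots,a'\}$, by sending each left-side top $x \in A$ to the bottom of its $A$-edge and sending $a+i$ to $y^r_i$ for each $i$. The partition above makes both $\sigma$ and $\omega$ well-defined bijections. Setting $L = I_{a+1}\otimes\sigma$, $Z = T_{a+1}\circ\cdots\circ T_{a+|e|}$, and $R = \omega\otimes I_{b'+1}$, I would then trace each edge through $L\circ Z\circ R$: the crucial property of $Z$ is that it acts as a cyclic shift on the block $\{a+1,\ldots,a'+1\}$, sending $a+1$ to $a'+1$ (so $e$ is realised), sending $a+i+1$ to $a+i$ for $i=1,\ldots,|e|$ (so returning edges are correctly routed after $\omega$ is applied), and fixing every other position (so $A$ passes through $Z$ untouched and is routed by $\omega$, while $C$ passes through $Z$ untouched after being routed by $\sigma$). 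The special case $a'+1 = a+b+1$ forces $b'=0$ and $C=\emptyset$, which makes $\sigma$ the identity on $\{a+2,\ldots,a+b+1\}$ and hence $L = I_N$.

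Finally, for the length identity I would count inversions (equivalently, crossings). Using the partition, the crossings of $X$ divide into six types: those internal to $A$, $B$, $C$, those between $A$ and $B$, those between $B$ and $C$, and the $|e|$ crossings of $e$ with $B$; the hypothesis on $e$ together with elementary comparisons of indices rules out crossings between $A$ and $C$, between $e$ and $A$, and between $e$ and $C$. Then $\ell(Z) = |e|$ captures exactly the $e$-$B$ crossings; the inversions of $\sigma$ account for the internal crossings of $C$ together with the $B$-$C$ crossings; and the inversions of $\omega$ account for the internal crossings of $A$ and $B$ together with the $A$-$B$ crossings. Summing gives $\ell(X)$. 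The main obstacle is this crossing-bookkeeping: verifying that the relative-order-preserving choice inside each block really does produce no spurious inversions, and that the six enumerated types exhaust all crossings.
\end{proofsketch}
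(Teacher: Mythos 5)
Your proposal is correct, but it proves the theorem by a genuinely different route from the paper. The paper works at the level of words: it takes a minimal factorization $F$ of $X$, observes that since $e$ does not come back it passes through exactly the $T$-primes of indices $a+1,\dots,a+|e|$, rewrites $F$ with the axioms so that these form the consecutive subword $Z$, and then uses the swap rules (Axiom 13) to push every remaining letter with index less than $a$ to the right of $Z$ and every letter with index greater than $a'+1$ to the left, yielding $F = L\circ Z\circ R$; the length identity is then immediate because the rewriting never changes the word length. You instead work entirely at the level of diagrams: you use \Cref{cor:SN_comes_back_iff_other_edge} to partition the edges other than $e$ into the three families $A$, $B$, $C$, construct $\sigma$ and $\omega$ explicitly, check that $L\circ Z\circ R$ reproduces $X$ edge by edge, and prove $\ell(X)=\ell(L)+\ell(Z)+\ell(R)$ by classifying the inversions of $X$ and matching them bijectively with the inversions of $\sigma$, of $\omega$, and the $|e|$ crossings of $e$ with $B$. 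Your bookkeeping does close: the excluded crossing types ($e$--$A$, $e$--$C$, $A$--$C$) are indeed impossible by index comparison, the sorted choices $r_i\mapsto a+1+i$ and $a+i\mapsto y^r_i$ introduce no spurious inversions within $B$, and the $B$--$C$ and $A$--$B$ inversions of $\sigma$ and $\omega$ are in bijection with the corresponding crossings of $X$. The trade-off is that the paper's argument is shorter but leans on the (not fully justified) step of gathering the $Z$-letters into a consecutive subword of a minimal factorization, whereas your construction is longer but fully explicit, independent of any rewriting of factorizations, and gives the decomposition constructively rather than existentially.
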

\begin{proof}
    Suppose a minimal factorization $F$ for $X$ is given. Since we know that $e$ does not come back, then we also know that $\#T(e) = |e|$. This implies that $F$ contains the sub-word $Z = T_{a+1} \circ T_{a+2} \circ \cdots T_{a+|e|}$. Now use the axioms to rewrite $F$ so that $F = L' \circ Z \circ R'$, where $L'$ and $R'$ contain the rest of the factorization. Finally, use Axiom 13 to move every prime $p_i$ in $L'$ with $i < a$ into $R'$ and move every prime $p_i$ in $R'$ with $i > a'+1$ into $L'$. Thus obtaining $F = L \circ Z \circ R$. This obviously satisfies $\ell(X) = \ell(L) + \ell(Z) + \ell(R)$.

    In the case in which $N = y' = a+b+1$, we can construct $R$ by removing edge $e$ from $X$, relabelling all upper nodes $d$ such that $a+2 \leq d \leq N$ as $d-1$ and adding the edge $(N,N')$. By construction, we have then that $X = Z \circ R$ and therefore we can set $L = I_N$.
\end{proof}

\begin{figure}
    \centering
\subfloat[]{\includegraphics{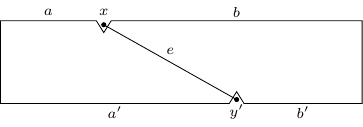}\vspace{0.65cm}}\hfill
\subfloat[]{\includegraphics{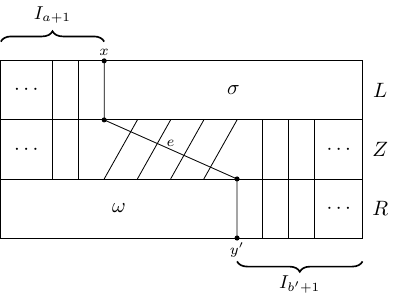}}
 
    \caption{(a) A tangle $X \in S_N$ with an edge $e = (x,y')$ that does not come back. (b) The LZR decomposition of $X$. $\sigma$ and $\omega$ are tangles in $S_b$ and $S_{a'}$, while $Z = T_{a+1} \circ T_{a+2} \circ \cdots \circ T_{a+|e|}$.}
    \label{fig:LZRDec_ex}
\end{figure}

Finally, we can now prove that if an edge $e$ passes through at least one $U$-prime then it cannot come back (see \Cref{fig:goingBackExample}). We assume $U_i$ to be the last $U$-prime edge $e$ passes through before passing through prime tangle $p$. This is because in the case in which $e$ passed through another $U$-prime, say $U_j$, in a different factorization, we could pick $U_j$ as the focus of the proof. This implies that, after $U_i$, $e$ passes through only $T$-primes, in a generic tangle $A$, before $p$. We will also assume that it is the first time that $e$ comes back. 
This assumption also allows us to say that, since $e$ cannot come back the first time, it can never come back.

Lastly, we prove only the case in which $e$ is a negative transversal in $A$ and $U_i$ is on top of $A$. This is because we can reflect $U_i \circ A \circ T_j$ vertically or horizontally to cover any case, and since reflecting a tangle does not change the length of its factorization, then if one of them is not minimal, then all reflections are not minimal too. This implies that an edge cannot come back if at \textit{any time} it passes through a $U$-prime.
\begin{figure}
    \centering
    \includegraphics[width = 0.4\textwidth]{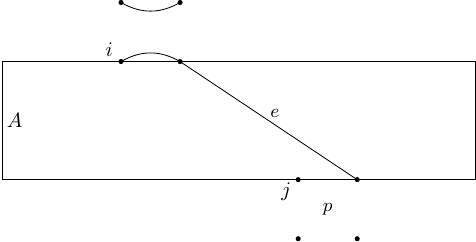}
    \caption{The general case in which an edge $e$ ``comes back''. $e$ passes through a $U$-prime $U_i$, then, after a series of $T$-primes contained in a tangle $A \in \B{N}$, it passes through a prime tangle $p_j$ that reduces it size. We would like to prove that the factorization $U_i \circ F_A \circ p_j$ (where $F_A$ is a minimal factorization for $A$) is always not minimal.}
    \label{fig:goingBackExample}
\end{figure}

We now prove that the configuration seen in \Cref{fig:goingBackExample} always results in a not minimal factorization. By using \Cref{thm:partialTau} and \Cref{thm:LZRDecompositionTheorem} we can now reduce the general case of \Cref{fig:goingBackExample} to a simpler one, in which we prove the non-minimality of the factorization $U_i \circ Z \circ Z' \circ T_{j}$, where $Z$ and $Z'$ are the results of two LZR decompositions and thus have the form $Z = T_{i+1} \circ T_{i+2} \circ \cdots \circ T_{j}$ and $Z' = T_{k} \circ T_{k+1} \circ \cdots \circ T_{j-1}$ for some $k < j$. Follow \Cref{fig:simplerCase} for the steps of the proof.

Once we are in the case of \Cref{fig:simplerCase}i we will have a factorization in this form:
\begin{equation*}
    U_i \circ \underbrace{T_{i+1} \circ T_{i+2} \circ \cdots \circ T_{j}}_{Z} \circ \underbrace{T_{k} \circ T_{k+1} \circ \cdots \circ T_{j-1}}_{Z'} \circ T_{j}
\end{equation*}

\begin{figure}
    \centering
    \newcommand{\scale}{0.33}
    \subfloat[]{\includegraphics[width = \scale\textwidth]{figs/comes_back/A.pdf}}\hfill
    \subfloat[]{\includegraphics[width = \scale\textwidth]{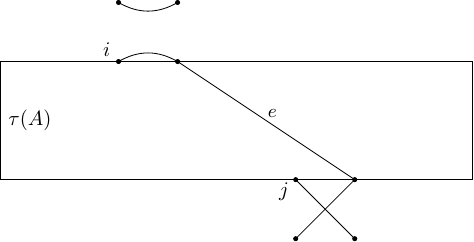}}\hfill
    \subfloat[]{\includegraphics[width = \scale\textwidth]{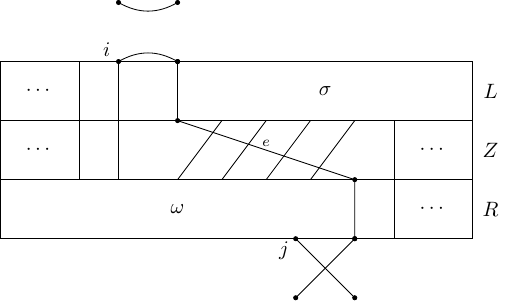}}\hfill\\[3ex]
    \subfloat[]{\includegraphics[width = \scale\textwidth]{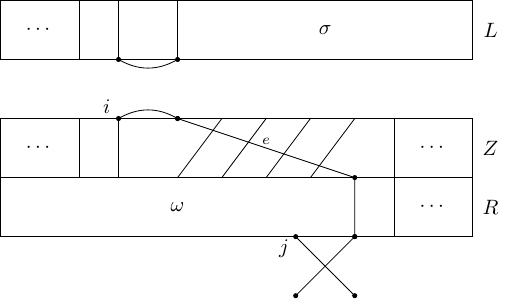}}\hfill
    \subfloat[]{\includegraphics[width = \scale\textwidth]{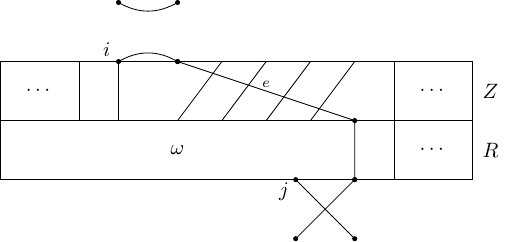}}\hfill
    \subfloat[]{\includegraphics[width = \scale\textwidth]{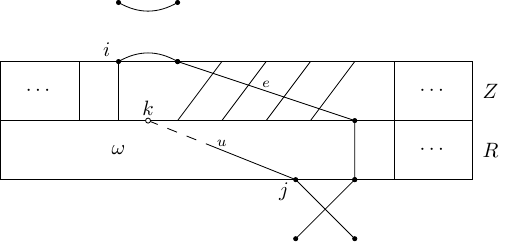}}\hfill\\[3ex]
    \subfloat[]{\includegraphics[width = \scale\textwidth]{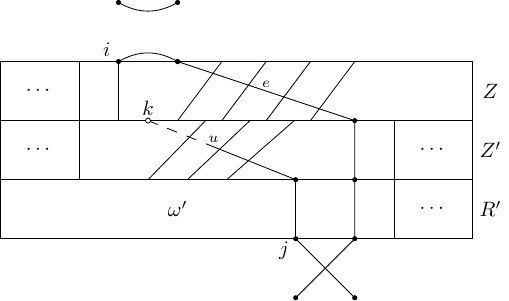}}\hfill
    \subfloat[]{\includegraphics[width = \scale\textwidth]{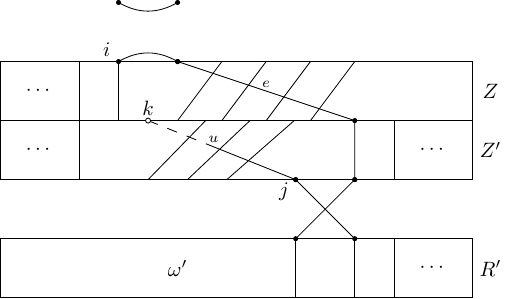}}\hfill
    \subfloat[]{\includegraphics[width = \scale\textwidth]{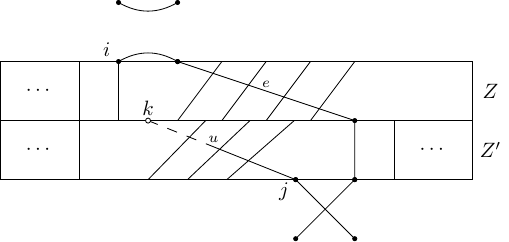}}\hfill\\[3ex]

    \caption{(a) The original tangle $X$, which is a composition of $U_i$, a tangle $A \in \B{N}$ and a prime tangle $p_j$. (b) Apply $\tau$ on $A$ and $p_j$ (\Cref{thm:partialTau}). Remember that $e$ passes through only $T$-primes, therefore it is still present in $\tau(A)$. (c) Perform a LZR decomposition on $\tau(A)$ in which $a = i$, $b = N - i - 1$, $a' = j$ and $b' = N - j - 1$. (d) By \Cref{thm:LZRDecompositionTheorem}, $L = I_{i+1} \otimes \sigma$ with $\sigma \in S_{b}$. This implies that we can move $U_i$ to be under $L$. (e) Since $L$ no longer affects edge $e$, we can delete it. (f) Since $R = \omega \otimes I_{N - j}$ with $\omega \in S_j$, then there exists an edge $u = (k,j')$ for some $k < j$. The white circle and the dashed line indicate that we do not know where $k$ lies on the top row. (g) Perform a LZR decomposition on $R$, but since we have that $u$ is connected to $j$, we then have that $L'$ is the identity tangle and can be ignored (remember the special case for \Cref{thm:LZRDecompositionTheorem}). We thus now have $R = Z' \circ R'$. (h) As before, we can move $T_j$ above $R'$. (i) We can now delete $R'$, and this is the simplified case we will prove will always imply a not minimal factorization.}

    \label{fig:simplerCase}
\end{figure}

We will go through all cases for $k$ and prove that this factorization is always not minimal:
\begin{itemize}
    \item if $k > i$, then there are too many crossings (\Cref{fig:non_minimal_tt}):
    \begin{itemize}
        \item the factorization $Z \circ Z' \circ T_j$ has $\ell(Z) + \ell(Z') + 1$ crossings, but edge $e$ and edge $u$ can be redrawn so that they do not cross, thus obtaining the same tangle in $S_N$ but with fewer crossings. This implies that this factorization is not minimal;
    \end{itemize}
    
    \item if $k \leq i$, then the factorization contains $U_i \circ T_{i+1} \circ T_i$ (\Cref{fig:non_minimal_utt}):
    \begin{itemize}
        \item every factor in $Z'$ with index $k < i-1$ can be ignored because 
        \begin{equation*}
        \begin{array}{cc}
             U_i \circ \underbrace{T_{i+1} \circ T_{i+2} \circ \cdots \circ T_{j}}_{Z} \circ \underbrace{\overbrace{T_{k} \circ T_{k+1} \circ \cdots \circ T_{i-2}}^{k < i - 1} \circ T_{i-1} \circ T_i \circ T_{i+1} \circ \cdots \circ T_{j-1}}_{Z'} \circ T_{j} & = \\
            \overbrace{T_{k} \circ T_{k+1} \circ \cdots \circ T_{i-2}}^{k < i - 1} \circ U_i \circ T_{i+1} \circ T_{i+2} \circ \cdots \circ T_{j} \circ T_{i-1} \circ T_{i} \circ T_{i+1} \circ \cdots \circ T_{j-1} \circ T_{j} &  \\ 
        \end{array}
        \end{equation*}
        therefore reducing the factorization to 
        \begin{equation*}
            U_i \circ \underbrace{T_{i+1} \circ T_{i+2} \circ \cdots \circ T_{j}}_{Z} \circ \underbrace{T_{i-1} \circ T_{i} \circ T_{i+1} \circ \cdots \circ T_{j-1}}_{Z'} \circ T_{j}
        \end{equation*}
        By construction, we have that $Z \circ Z' \circ T_j$ will contain an edge $e = (i-1, j'+1)$, which falls into the special case of the LZR decomposition. This implies that we can rewrite the above factorization as
        \begin{equation*}
            U_i \circ Z'' \circ R 
        \end{equation*}
        where $Z'' = T_{i-1} \circ T_i \circ \cdots \circ T_j$ and $R  = \omega \otimes I_{N - j}$, where $\omega \in S_j$. This is clearly not minimal because
        \begin{equation*}
            \begin{array}{lc}
                U_i \circ Z''  &= \\
                U_i \circ T_{i-1} \circ T_i \circ \cdots &=\\
                U_i \circ U_{i-1} \circ \cdots  
            \end{array}
        \end{equation*}

    \end{itemize}
\end{itemize}
in both cases, we have shown that the factorization was not minimal.

\begin{figure}
    \centering
    \newcommand{\scale}{0.18}
    \subfloat[]{\includegraphics[width = \scale\textwidth]{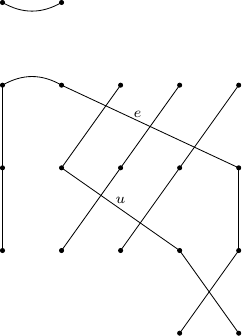}}\hspace{3cm}
    \subfloat[]{\includegraphics[width = \scale\textwidth]{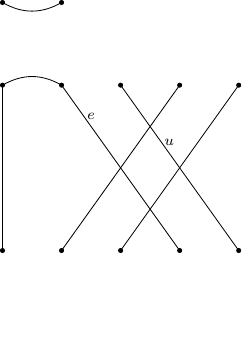}}
    
    \caption{Example for the case in which $k > i$. (a) The factorization $U_i \circ Z \circ Z' \circ T_j$ after LZR decomposition. There are six crossings. (b) The tangle obtained by composing $Z$, $Z'$ and $T_j$. Edges $e$ and $u$ were drawn again so that they do not cross. Note how the resulting tangle has only four crossings, meaning that the previous one was not minimal.}
    
    \label{fig:non_minimal_tt}
\end{figure}

\begin{figure}
    \centering
    \newcommand{\scale}{0.3}
    \subfloat[]{\includegraphics[width = \scale\textwidth]{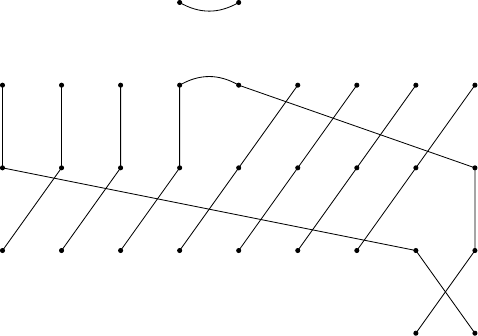}}\hfill
    \subfloat[]{\includegraphics[width = \scale\textwidth]{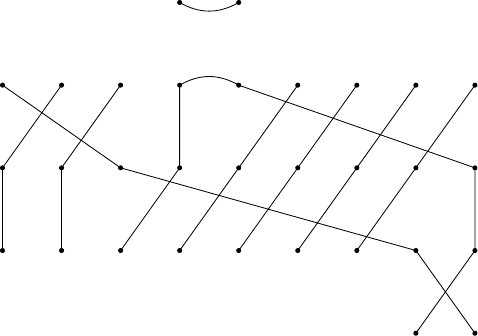}}\hfill
    \subfloat[]{\includegraphics[width = 0.225\textwidth]{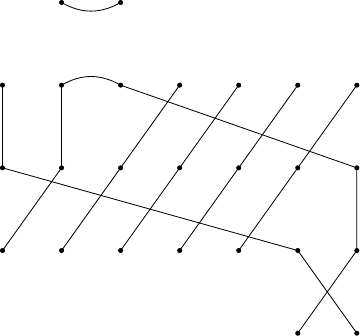}}\hfill\\[3ex]
    \subfloat[]{\includegraphics[width = 0.239\textwidth]{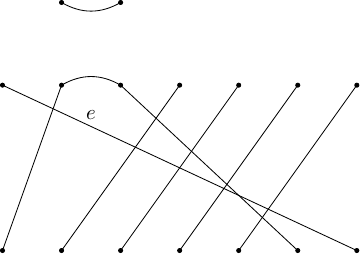}}\hspace{3cm}
    \subfloat[]{\includegraphics[width = 0.27\textwidth]{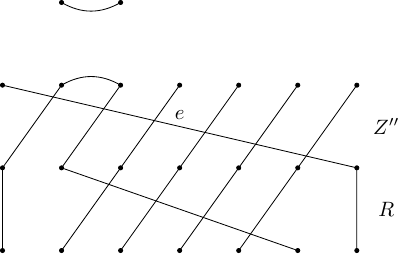}}
    \caption{Example for the case in which $k \leq i$. (a) The factorization $U_i \circ Z \circ Z' \circ T_j$ after LZR decomposition. (b) All $T$-primes with an index less than $i-1$ are removed from their tangle. (c) The $T$-primes are removed. (d) $Z$ and $Z'$ are composed together. By construction, there is an edge $e = (i-1, j'+1)$. (e) Perform a LZR decomposition (special case). By the structure of $Z''$, the resulting factorization is not minimal because it contains $U_i \circ T_{i-1} \circ T_i$. }
    \label{fig:non_minimal_utt}
\end{figure}

There are two more special cases to address. The first one is when edge $e$ is a zero transversal (\Cref{fig:special_case_1}). In this case, the procedure is the same as in \Cref{fig:goingBackExample} but we perform the LZR decomposition only once.

\begin{figure}
    \centering
    
    \newcommand{\scale}{0.2}
    \subfloat[]{\includegraphics[width = \scale\textwidth]{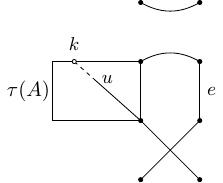}}\hfill
    \subfloat[]{\includegraphics[width = \scale\textwidth]{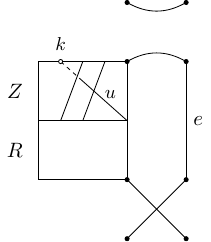}}\hfill
    \subfloat[]{\includegraphics[width = \scale\textwidth]{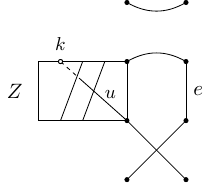}}\hfill
    \subfloat[]{\includegraphics[width = 0.35\textwidth]{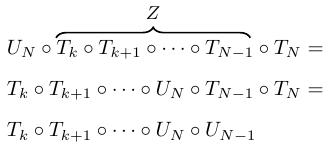}}

    \caption{The special case in which $e$ is a zero transversal. (a) Given a tangle $A \in \B{N}$ and a prime tangle $p_N$, apply $\tau$ to $A$ and $p$. In this case, the index for the $U$-prime is $N$. (b) LZR decomposition of $\tau(A)$. (c) Move $R$ below $T_N$ and remove it. (d) Proof that the factorization is not minimal.}
    \label{fig:special_case_1}
\end{figure}

The last special case is when not only $e$ is a zero transversal, but $A$ has another zero transversal at $(N-1,N'-1)$ (\Cref{fig:special_case_2}). In this case, no simplification step is required because the factorization is trivially not minimal for any prime tangle $p$.

\begin{figure}
    \centering
    \newcommand{\scale}{0.2}

    \subfloat[]{\includegraphics[width = \scale\textwidth]{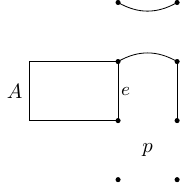}}\hspace{3cm}
    \subfloat[]{\includegraphics[width = \scale\textwidth]{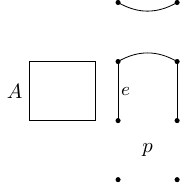}}
    
    \caption{The factorization is not minimal because $U_i \circ p$ is not minimal for any $p$.}
    \label{fig:special_case_2}
\end{figure}

This whole argument proves that $\#U(e) > 0 \implies \#T(e) < |e|$. This is because if it was the case that if $\#U(e) > 0$ but $\#T(e) \geq |e|$, then it would imply that there are more $T$-primes than $|e|$ and therefore, at some point, $e$ must come back, which we just proved is not possible given that $\#U(e) > 0$.
This actually proves that $\#T(e) \geq |e| \implies \#U(e) = 0$ by contraposition.

This also explains why the merge operation is undefined for some edges. If $h = (i,i+1)$ is an upper hook of size one and $e$ is another distinct edge, then $e$ can pass through $U_i$ only if it satisfies one of the conditions presented in \Cref{sec:preliminaries}. If it doesn't, then it must come back to pass through $U_i$, but this would imply that that particular factorization is not minimal, and therefore can be ignored.

\section{Node polarity} \label{sec:nodePolarity}

In this Section we will prove that $\tau(X)$ is unique and computable in polynomial time. To do this, we will introduce the concept of ``node polarity'', a property preserved by $\tau$.

Given a node $x$, we define the \textit{polarity} of $x$ as follows:
\begin{itemize}
    \item if $x$ is connected to a transversal edge $e$, then $x$ is positive (+) if $e$ is positive transversal, negative (-) if it is negative transversal and zero (0) if it is a zero transversal
    \item if $x$ is connected to an upper hook $h$, then $x$ is negative (-) if it is the left node of $h$ and positive (+) if it is its right node. The polarity is reversed for lower hooks.
\end{itemize}

\begin{theorem}\label{thm:tauPreservesPolarity}
    $\tau$ preserves node polarity.
\end{theorem}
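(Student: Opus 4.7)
The plan is to prove the statement by induction on the length $k$ of a minimal factorization of $X$. Before setting up the induction I would record a convenient rephrasing of polarity: unpacking the definitions case by case shows that, for an upper node $x$, the polarity is ``$+$'' exactly when $x$'s partner lies strictly to the left of $x$, ``$-$'' exactly when strictly to the right, and ``$0$'' exactly when directly below $x$; the rule for a lower node is identical with left and right exchanged (so the ``reversal'' convention for lower hooks drops out automatically). In particular, for a tangle in $S_N$, the polarity of $u$ is just $\mathrm{sign}(u - v)$ where $(u, v')$ is the transversal through $u$. As a warm-up I would verify directly that a single prime $T_i$ and a single prime $U_i$ produce identical polarity profiles on all $2N$ nodes, which handles $\ell(X) \leq 1$.

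For the inductive step I write a minimal factorization $F = F' \circ p$ of $X$ of length $k$. Since $F'$ is itself minimal, the tangle $X' = F'$ has $\ell(X') = k - 1$, and the inductive hypothesis gives that $X'$ and $Y' := \tau(F')$ agree on every node's polarity. Since $\tau(F) = Y' \circ \tau(p)$, it suffices to show that $X' \circ p$ and $Y' \circ \tau(p)$ have the same polarity profile. I split on $p$. When $p = T_i$ (so $\tau(p) = T_i$), appending $T_i$ relabels the bottom nodes $i'$ and $(i+1)'$; using the reformulated polarity rule I would show that the new polarity at every node is a function only of the old polarity profile of the underlying tangle near the indices $i$ and $i+1$, so that agreement of the $X'$ and $Y'$ profiles forces agreement after composition. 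When $p = U_i$, appending $U_i$ joins the two edges of $X'$ ending at $i'$ and $(i+1)'$ into a single edge (via $U_i$'s upper hook) and installs a new lower hook at $(i', (i+1)')$, while appending $T_i$ to $Y' \in S_N$ merely relabels two bottom nodes; I perform a four-way case analysis on the partners of $i'$ and $(i+1)'$ in $X'$ (each upper or lower, and on which side of $i$ and $i+1$ they sit) and verify polarity agreement at every affected node.

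The main obstacle I expect is that polarity does not determine connectivity — two tangles may agree on polarity but differ wildly on edges — so one cannot naively conclude ``same polarity of $X'$ and $Y'$ implies same polarity after composition''. Small examples show that without further hypotheses the naive claim is false: e.g.\ if one allows $F' \circ T_i$ to be non-minimal, the polarity after composition can genuinely disagree between $X' \circ T_i$ and $Y' \circ T_i$. Minimality of $F$ is precisely what rescues the inductive step: via the delete rules of \Cref{tab:axioms} it rules out exactly the degenerate configurations (such as $(i', (i+1)')$ being a lower hook of $X'$, which would make $F' \circ T_i = F'$ as tangles and contradict $|F| = k$) where the $X'$-based and $Y'$-based computations would yield different polarities. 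The delicate part of the proof is therefore systematically checking that every sub-case of the partner analysis in which the composition could disagree is precluded by minimality of $F$, so that in the admissible sub-cases the ``merge'' effected by $U_i$ and the ``swap'' effected by $T_i$ produce the same sign of displacement at each affected node.
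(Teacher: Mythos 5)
There is a genuine gap in the inductive step. Your inductive hypothesis records only that $X'$ and $Y'=\tau(F')$ agree node-by-node on polarity, but the effect of appending a prime on the polarity profile is \emph{not} a function of the old profile: it depends on which nodes are actually partnered with $i'$ and $(i+1)'$, and polarity does not determine that. Concretely, suppose the upper node $i+1$ is partnered with $i'$ in $X'$ (polarity $+$), while in $Y'$ the node $i+1$ is partnered with some $e'$ with $e<i$ and $i'$ is partnered with some $c\geq i+2$ --- both configurations are consistent with identical polarity profiles. Appending $T_i$ turns the first into a zero transversal at $i+1$ but leaves the second with polarity $+$ at $i+1$, so ``agreement of the $X'$ and $Y'$ profiles forces agreement after composition'' fails even in your easy case $p=T_i$. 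The same problem is worse for $p=U_i$: the node $a$ partnered with $i'$ in $X'$ gets re-partnered to wherever the edge at $(i+1)'$ ends, and whether its polarity survives depends on the \emph{position} of that endpoint, not its polarity. You acknowledge this and claim minimality of $F$ precludes every disagreeing sub-case via the delete rules, but the delete rules of \Cref{tab:axioms} only dispose of the trivial degeneracies (e.g.\ $(i',(i+1)')$ already a lower hook of $X'$, so $F'\circ U_i=F'$). The substantive sub-cases --- e.g.\ the partner of $(i+1)'$ in $X'$ being a lower node to the left of $i'$, or an upper node placed so that the merged edge reverses its direction of displacement --- are exactly the configurations whose non-minimality is the content of the entire ``passing through and coming back'' analysis of \Cref{sec:passThrough} (the LZR decomposition and the case analysis culminating in $\#U(e)>0\implies e$ does not come back). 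Your proposal therefore buries the main lemma inside the unproved sentence ``precluded by minimality,'' and without it the induction does not close.

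For comparison, the paper's proof is direct rather than inductive: edges with $\#T(e)\geq|e|$ are literally the same edges in $X$ and $\tau(X)$, so nothing needs checking there; for the remaining edges one invokes the \Cref{sec:passThrough} result that an edge passing through a $U$-prime never comes back, so its horizontal displacement is monotone along the strand and replacing each $U_i$ by $T_i$ cannot change the sign of that displacement at the endpoints. If you want to salvage your induction, you would need both to strengthen the inductive hypothesis (carrying the fact that the edges with $\#T(e)\geq|e|$ are shared and that the remaining strands are direction-monotone, not merely that polarities agree) and to prove the no-coming-back statement anyway --- at which point the direct argument is shorter.
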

\begin{proof}
    For all edges $e$ such that $\#T(e) \geq |e|$ this is trivially true because they are present in both in $X$ and $\tau(X)$, this includes all edges with node polarity 0. Therefore we need to prove that $\tau$ preserves node polarity for all nodes that are connected to edges $e$ such that $\#U(e) > 0$.

    As we can see in the following diagram, after $\tau$ every node of every factor in the factorization will be connected to another one with the same polarity. 
    
    \begin{figure}[H]
        \centering
        \includegraphics[]{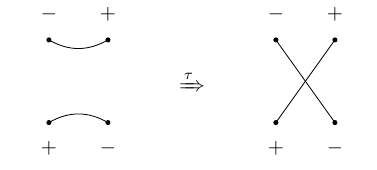}
    \end{figure}

    Since we are assuming that $\#U(e) > 0$, then it implies that $e$ does not come back, and therefore node polarity is preserved.
\end{proof}

Polarity preservation is the first property that $\tau(X)$ must satisfy, but it is not enough because there are many tangles in $S_N$ with the same polarity as $X$. \Cref{lem:perm} and \Cref{thm:polarityDontCross} will state that edges with the same polarity in $\tau(X)$ do not cross, which will imply the uniqueness of $\tau(X)$.

\begin{lemma}\label{lem:perm}
    Let $\sigma$ be a permutation containing an inversion ($\sigma(i)$,$\sigma(j)$), with $i < j$. After swapping $i$ with $j$, the new permutation will have fewer inversions than $\sigma$.
\end{lemma}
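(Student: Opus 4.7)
The plan is to compare the inversion counts of $\sigma$ and of the permutation $\sigma'$ obtained by swapping the values at positions $i$ and $j$, by accounting for every ordered pair of positions whose inversion status can possibly change. I would first fix the notation $a=\sigma(i)$ and $b=\sigma(j)$, so that the hypothesis reads $i<j$ and $a>b$, and then define $\sigma'(i)=b$, $\sigma'(j)=a$, with $\sigma'(k)=\sigma(k)$ for every other $k$.

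Next I would partition the pairs of positions $(k,l)$ with $k<l$ into four groups according to how $\{k,l\}$ interacts with $\{i,j\}$: (I) pairs with $\{k,l\}\cap\{i,j\}=\emptyset$; (II) pairs with exactly one endpoint in $\{i,j\}$ and the other endpoint $m$ lying outside the closed interval $[i,j]$; (III) pairs with exactly one endpoint in $\{i,j\}$ and the other endpoint $m$ satisfying $i<m<j$; and (IV) the pair $(i,j)$ itself. Group (I) is obviously unchanged. For group (II), given an external $m$, the two pairs that touch $\{i,j\}$ are $\{(m,i),(m,j)\}$ when $m<i$, or $\{(i,m),(j,m)\}$ when $m>j$; in either case swapping $a$ and $b$ merely permutes the two inversion indicators, so their sum is invariant. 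Group (IV) contributes exactly $-1$, since $(i,j)$ was an inversion in $\sigma$ but is no longer in $\sigma'$.

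The crucial case is group (III). For each $m$ with $i<m<j$ set $c=\sigma(m)$; then before the swap the pairs $(i,m)$ and $(m,j)$ contribute $[a>c]+[c>b]$ inversions, while after the swap they contribute $[b>c]+[c>a]$. A three-way case split on where $c$ sits relative to $a>b$ (namely $c>a$, $b<c<a$, or $c<b$) shows that the joint contribution drops by $0$ in the outer two cases and by $2$ in the middle case. Summing over all four groups, the total change in the inversion count equals $-1-2K$, where $K=|\{m:i<m<j,\ b<\sigma(m)<a\}|$; in particular this is at most $-1$, and therefore $\sigma'$ has strictly fewer inversions than $\sigma$.

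The only real hazard is bookkeeping: verifying that the ``external'' pairs in group (II) genuinely cancel in pairs, and that the three-way split in group (III) is exhaustive and correctly signed. No deeper machinery is needed beyond this case analysis.
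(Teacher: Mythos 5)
Your proposal is correct and follows essentially the same route as the paper's proof: both arguments reduce to counting how the inversion status changes for pairs involving positions strictly between $i$ and $j$, and both arrive at the same net change of $-1-2K$ (the paper writes this as $-2|\sigma_C|-1$, where $\sigma_C$ is exactly your set of middle positions whose values lie between the two swapped values). Your version is a bit more explicit about why the pairs with an endpoint outside $[i,j]$ cancel, which the paper asserts without detail, but the substance is identical.
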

\begin{proof}
    For readability's sake, we will define $y = \sigma(i)$ and $x = \sigma(j)$. We also define the notation $\sigma_{I}$ for the set of elements in $\sigma$ having indices in the set $I$.
    
    Let $L$, $C$ and $R$ be three disjoint sets of indices satisfying $i < L,C,R < j$. Let's also assume that
    \begin{equation*}
        \sigma_{L} > y > \sigma_{C} > x > \sigma_{R}
    \end{equation*}
    meaning that the elements between $y$ and $x$ can take any value. We do not need to check the elements outside the rage $[i,j]$ because their number of inversions will stay fixed.

    We can now observe what happens after swapping $y$ with $x$. Moving $y$ will add $|\sigma_{L}|$ inversions because $y$ is smaller than every element in $\sigma_L$, while it will remove $|\sigma_{C}| + |\sigma_{R}|$ inversions because $y$ is bigger than the elements contained in $\sigma_C$ and $\sigma_R$.

    On the other hand, moving $x$ will remove $|\sigma_{L}| + |\sigma_{C}|$ inversions because they contain bigger elements, and it will add $|\sigma_{R}|$ because it contains smaller elements.

    Finally, swapping $y$ and $x$ will remove one inversion because we assumed $y > x$.

    By summing everything together we obtain that the new permutation will have a different amount of inversions, i.e. :
    \begin{align*}
        |\sigma_{L}| - |\sigma_{C}| - |\sigma_{R}| -|\sigma_{L}| - |\sigma_{C}| + |\sigma_{R}| - 1 & =  -2|\sigma_{C}| - 1
    \end{align*}
    We can see that the difference in the number of inversions is always negative, and therefore the new permutation will have fewer inversions.   
\end{proof}

\begin{theorem}\label{thm:polarityDontCross}
    Let $X$ be a tangle with minimal factorization $F$. If the nodes of two edges $e_1, e_2 \in \tau(X)$ have the same polarity and $e_1$ and $e_2$ are not edges of $X$ too, then they do not cross.
\end{theorem}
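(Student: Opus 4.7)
The plan is to proceed by contradiction. Suppose $e_1 = (x_1, y_1')$ and $e_2 = (x_2, y_2')$ are two edges of $\tau(X)$ of the same polarity that both fail to be edges of $X$, and that they cross. The zero case can be dismissed immediately since a zero transversal $(x, x')$ crosses no other edge. Otherwise, labeling so that $x_1 < x_2$, the crossing condition forces $y_1' > y_2'$. Viewing $\tau(X)\in S_N$ as a permutation $\sigma$ with $\sigma(x)=y$ whenever $(x,y')\in\tau(X)$, the pair $(x_1,x_2)$ is an inversion of $\sigma$.

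Next I would apply \Cref{lem:perm} to this inversion: swapping $\sigma(x_1)$ and $\sigma(x_2)$ yields a permutation $\sigma'$, hence a tangle $X'\in S_N$, with strictly fewer crossings than $\tau(X)$. I would then check that $\sigma'$ preserves the polarity of every node by a short case analysis on the shared sign of $e_1$ and $e_2$. For instance, if both are positive, then $x_1>y_1'$, $x_2>y_2'$ combined with $x_1<x_2$, $y_2'<y_1'$ gives the chain $x_2>x_1>y_1'>y_2'$, which forces $x_1>y_2'$ and $x_2>y_1'$, so the new edges $e_1'=(x_1,y_2')$ and $e_2'=(x_2,y_1')$ are again positive; the negative–negative case is symmetric. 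All other nodes and edges are untouched.

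To conclude, I would derive a shorter factorization of $X$ from $X'$, contradicting the minimality of $F$. The idea is to reverse $\tau$ on the two swapped indices: in a minimal factorization of $X'$ in $S_N$, substitute back $U$-primes for the $T$-primes at exactly the positions at which $F$ had $U$-primes producing $e_1$ and $e_2$. Since $e_1,e_2$ are not edges of $X$, these positions are dictated by the hooks and re-routings of $X$, which in turn are determined by the nodes of opposite polarity that must be joined in the two rows—data that is preserved from $\tau(X)$ to $X'$. Hence this reinsertion produces a factorization $F'$ of $X$ with $|F'|$ equal to the crossing number of $X'$, strictly less than $\ell(\tau(X))=\ell(X)=|F|$ (by \Cref{cor:FandTauFSameLength}), a contradiction.

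The main obstacle I foresee is justifying the reinsertion step cleanly: one must argue that a $T\to U$ substitution at the prescribed indices of a minimal factorization of $X'$ actually yields a word for $X$, not for some other tangle. The pass-through analysis of \Cref{sec:passThrough} should be the crucial tool, since it pins down how edges behave at $U$-primes and ensures the substitution acts locally without forcing an edge to come back. An alternative route, which might circumvent the explicit construction, is to invoke \Cref{thm:partialTau}: the swap prescribed by \Cref{lem:perm} should lift, via $\tau^{-1}$, to a sequence of braid/swap/delete rewrites inside $F$ that shortens it, directly contradicting minimality.
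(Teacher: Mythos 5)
Your proposal takes essentially the same route as the paper: argue by contradiction, view the crossing of $e_1$ and $e_2$ as an inversion, apply \Cref{lem:perm} to uncross them and reduce the crossing count, and contradict the minimality of $\tau(F)$ (whose length equals the crossing number of $\tau(X)$ by \Cref{cor:FandTauFSameLength}). The ``main obstacle'' you flag --- justifying that the uncrossed tangle lifts back to a genuine, shorter factorization of $X$ --- is precisely the step the paper's own three-sentence proof leaves implicit, so your more explicit treatment (including the polarity check after the swap) is if anything more careful than the published argument, not a departure from it.
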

\begin{proof}
    We know that $\tau$ preserves node polarity and also that since $|\tau(F)|$ is minimal, then it will have the minimal amount of $T$-primes and therefore the minimal amount of crossings in $\tau(X)$. By \Cref{lem:perm} we know that two edges that do not cross add fewer $T$-primes compared to edges that do cross. Therefore $e_1$ and $e_2$ do not cross in $\tau(X)$.
\end{proof}

We can now finally prove that there exists only one $\tau(X)$.

\begin{theorem}\label{thm:uniqueTfication}
    Given a tangle $X$, there exists only one $\tau(X)$ that preserves node polarity and minimizes the number of crossings.
\end{theorem}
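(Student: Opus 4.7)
My plan is to use node polarities together with the crossing-minimality requirement to pin down $\tau(X)$ uniquely, analysing the matching one polarity class at a time. Since $\tau(X)\in S_N$, every one of its $N$ edges is a transversal, and by \Cref{thm:tauPreservesPolarity} the polarity of every node of $\tau(X)$ agrees with its polarity in $X$. Hence the sets of positive, negative, and zero upper nodes (and the analogous three sets of lower nodes) are fixed data determined by $X$.

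First I would dispose of the zero-polarity nodes: an upper node $x$ of polarity $0$ can only sit on a zero transversal $(x,x')$, so those edges of $\tau(X)$ are immediately forced.

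The main step is to show that inside each of the two remaining polarity classes the matching is order-preserving, which determines it uniquely. List the upper positive nodes as $a_1<\cdots<a_p$ and the lower positive nodes as $b'_1<\cdots<b'_p$; polarity preservation both equalises the two counts and confines $\tau(X)$ restricted to these $2p$ nodes to a bijection between the two lists. Two positive transversals $(x_1,y'_1)$ and $(x_2,y'_2)$ with $x_1<x_2$ cross precisely when $y'_1>y'_2$, so an inversion of the induced permutation on $\{1,\dots,p\}$ is exactly a crossing in $\tau(X)$. If any such inversion existed, \Cref{lem:perm} would let us swap the two offending lower endpoints, strictly decreasing the number of crossings while leaving every node's polarity unchanged; the resulting diagram would still be a tangle in $S_N$ and still preserve polarity, contradicting the minimality of the crossing count. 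Hence the positive class is matched as $a_i\leftrightarrow b'_i$, and the identical argument applied to the negative class produces its unique order-preserving matching.

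Combining the forced zero transversals with the unique order-preserving matchings on the two nontrivial polarity classes determines every edge of $\tau(X)$, which gives uniqueness. The step I expect to be the only delicate one, though it is ultimately a bookkeeping check rather than a genuine obstacle, is verifying that the local swap supplied by \Cref{lem:perm} really produces a valid tangle in $S_N$ that still preserves the polarity of \emph{every} node: this holds because the swap only exchanges two lower endpoints that already sit in the same polarity class, so no node moves between classes and the reduction of crossings is genuine. One could alternatively invoke \Cref{thm:polarityDontCross} directly to rule out crossings inside a polarity class, but the inversion-reduction argument via \Cref{lem:perm} makes the uniqueness conclusion the most immediate.
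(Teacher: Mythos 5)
Your argument correctly shows that among \emph{all} polarity-preserving elements of $S_N$ there is a unique crossing-minimizer, namely the order-preserving matching inside each polarity class. But that object is not the $\tau(X)$ of the paper, and the discrepancy is a genuine gap rather than the ``bookkeeping check'' you anticipate. The map $\tau$ leaves every edge $e \in X$ with $\#T(e) \geq |e|$ untouched, so $\tau(X)$ must contain those edges verbatim, and the crossing-minimization in the theorem is only over the \emph{remaining} nodes. This is why the paper's proof opens by assuming all mentioned nodes are not connected to edges lying in both $X$ and $\tau(X)$, why \Cref{thm:polarityDontCross} carries the hypothesis that $e_1$ and $e_2$ are not edges of $X$, and why \Cref{algo:tauAlgo} builds its label sets $S, S'$ only from nodes on edges with $\#T(e) < |e|$. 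A retained edge \emph{can} cross a new edge of the same polarity, so matching each whole polarity class order-preservingly produces the wrong tangle. Concretely, take $X$ from \Cref{fig:passes_through_example} with minimal factorization $F = T_1 \circ U_2 \circ U_3 \circ T_1 \circ T_2$: the edge $(2,3')$ is retained, the upper and lower negative nodes are $\{1,2\}$ and $\{3',4'\}$, and $\tau(X) = \{(1,4'),(2,3'),(3,1'),(4,2')\}$ has $5 = \ell(X)$ crossings, among them a crossing between the two negative transversals $(1,4')$ and $(2,3')$. Your construction instead outputs $\{(1,3'),(2,4'),(3,1'),(4,2')\}$, which preserves the polarity of every node but has only $4$ crossings and does not contain the forced edge $(2,3')$; it is therefore not $\tau(X)$, and counting its crossings would give the wrong value for the length function.

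The repair is to impose the retained edges as a constraint before minimizing: first insert every edge $e$ of $X$ with $\#T(e) \geq |e|$ (including the zero transversals you treat separately), delete their endpoints from consideration, and only then run your order-preserving-within-class argument, via \Cref{lem:perm} or \Cref{thm:polarityDontCross}, on the nodes that remain. Restricted to those nodes your inversion-swapping argument is essentially the paper's induction that repeatedly connects the last two unmatched nodes of each sign, so the rest of your proof goes through once this constraint is in place.
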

\begin{proof}
    For the sake of brevity, we will assume all mentioned nodes are not connected to edges in both $X$ and $\tau(X)$. We will also just focus on nodes with negative polarity, because for positive polarity the argument is basically the same.

    Suppose two upper nodes $x_1 < x_2$ have negative polarity. Assume also that they are the last two upper nodes with negative polarity. Assume now the same for two lower nodes $y_1' < y_2'$.
    
    If we connect $x_1$ with $y_2'$ then it must be the case that $x_2$ has to be connected to $y_1'$, thus introducing a crossing, which is forbidden by \Cref{thm:polarityDontCross}. Therefore we have that $x_2$ must be connected to $y_2'$ in $\tau(X)$. We are now in a situation in which $x_1$ and $y_1'$ are the last nodes that are not connected, and by the previous argument, they must form an edge in $\tau(X)$.

    We can repeat this argument until there are no more edges to connect. Since at each step there was only one possible connection to make, it implies that $\tau(X)$ is unique.
    
\end{proof}

\begin{corollary}
    For all minimal factorizations $F$ and $F'$ for $X$, the tangles corresponding to $\tau(F)$ and $\tau(F')$ are equal.
\end{corollary}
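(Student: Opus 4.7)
The plan is to leverage the uniqueness result in \Cref{thm:uniqueTfication} and show that the tangles $\tau(F)$ and $\tau(F')$ both meet the characterization stated there, namely (i) they preserve the node polarity of $X$ and (ii) they have the minimal possible number of crossings in $S_N$. Since \Cref{thm:uniqueTfication} asserts that only one tangle in $S_N$ satisfies these two properties simultaneously, this will force equality.

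First I would invoke \Cref{thm:tauPreservesPolarity} applied to both $F$ and $F'$: since $\tau$ preserves node polarity, the tangles associated with $\tau(F)$ and $\tau(F')$ both exhibit exactly the polarity pattern determined by $X$ (the polarities are a property of $X$'s nodes, not of any particular factorization). Second, I would use \Cref{thm:tauMinimal} to conclude that both $\tau(F)$ and $\tau(F')$ are minimal factorizations in $S_N$, and then \Cref{cor:FandTauFSameLength} to conclude
\begin{equation*}
    |\tau(F)| = |F| = \ell(X) = |F'| = |\tau(F')|.
\end{equation*}
Since factorizations in $S_N$ consist entirely of $T$-primes and their length counts exactly the number of crossings of the associated tangle, both tangles corresponding to $\tau(F)$ and $\tau(F')$ realize the minimum possible crossing count compatible with $X$'s polarity assignment.

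Finally, I would appeal to \Cref{thm:uniqueTfication}: there is exactly one tangle in $S_N$ which preserves the node polarity of $X$ and minimizes the number of crossings. Since the tangles underlying $\tau(F)$ and $\tau(F')$ both satisfy these two conditions, they must coincide, which gives the corollary.

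I do not expect a genuine obstacle here, as the statement is essentially a packaging of the already-established uniqueness theorem together with \Cref{thm:tauMinimal} and \Cref{thm:tauPreservesPolarity}. The only point that deserves a line of care is making explicit that ``$\tau(F)$'' refers to a factorization, while \Cref{thm:uniqueTfication} speaks of the underlying tangle; but since any factorization in $S_N$ determines a unique tangle by composition, this is immediate and needs only a sentence of bookkeeping.
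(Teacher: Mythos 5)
Your proposal is correct and follows the same route the paper intends: the corollary is stated as an immediate consequence of \Cref{thm:uniqueTfication}, with \Cref{thm:tauPreservesPolarity} and \Cref{thm:tauMinimal} (via \Cref{cor:FandTauFSameLength}) supplying exactly the two hypotheses of that uniqueness statement. The paper gives no separate proof, and your write-up simply makes the implicit argument explicit.
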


The proof for \Cref{thm:uniqueTfication} gives also an idea of how $\tau$ could be computed. We first start with calculating the node polarity for each node in $X$ from left to right, but every time we find a node with a certain polarity, say ``+'', we will label that node with $+j$, where $j$ is a counter that keeps track of how many nodes with polarity ``+'' we have encountered so far. We can then compute $\tau(X)$ by adding every edge $e$ in $X$ such that $\#T(e) \geq |e|$, and then connecting nodes from top to bottom in $\tau(X)$ if and only if they have the same node polarity label in $X$ (\Cref{algo:tauAlgo}). See \Cref{fig:polarity_tau_example} for an example. \Cref{algo:tauAlgo} has therefore quadratic time complexity because we have to calculate $\#T(e)$ for each edge.

Now that we know that $\tau(X)$ is unique, it is not difficult to see that not only we could count the number of crossings of $\tau(X)$ to find the number of prime tangles for $X$, but we can also factorize $\tau(X)$ to find the indices of those primes. 

\begin{theorem} \label{thm:TauIndices}
    Given a tangle $X$ with minimal factorization $F_X$, then there exists a minimal factorization $F_Y$ for $Y = \tau(X)$ such that $F_Y$ has $T$-primes with the same indices in the same order as in $F_X$.
\end{theorem}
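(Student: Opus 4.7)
The plan is to let $F_Y$ be literally $\tau(F_X)$ and then check the three required properties: (i) its word of primes matches the indices of $F_X$ in order, (ii) it is a factorization of the tangle $\tau(X)$, and (iii) it is minimal. Properties (i) and (iii) are almost immediate from the setup, so the real work is property (ii): confirming that the tangle obtained by composing $\tau(F_X)$ is the same tangle that the polarity-based characterization of $\tau$ assigns to $X$.

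First I would unpack the definitions. Writing $F_X = p_{i_1} \circ p_{i_2} \circ \cdots \circ p_{i_k}$ with each $p_{i_j} \in \{T_{i_j}, U_{i_j}\}$, the definition of $\tau$ on factorizations immediately gives $\tau(F_X) = T_{i_1} \circ T_{i_2} \circ \cdots \circ T_{i_k}$, so property (i) is built in. For property (iii), \Cref{thm:tauMinimal} tells us directly that $\tau(F_X)$ is minimal because $F_X$ is.

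For property (ii) I would argue via the uniqueness result of \Cref{thm:uniqueTfication}. Let $Y'$ denote the tangle in $S_N$ obtained by composing $\tau(F_X)$. By \Cref{thm:tauPreservesPolarity}, $Y'$ has the same node polarity sequence as $X$. Moreover, since $\tau(F_X)$ is a minimal factorization in $S_N$, its length equals the number of crossings of $Y'$, and this number is the minimum possible over all tangles in $S_N$ that agree with $X$ on the polarity-preserved structure (because any strictly smaller crossing count would yield a shorter factorization of $X$ through the $\tau$-preimage machinery, contradicting \Cref{cor:FandTauFSameLength}). Hence $Y'$ satisfies the two defining properties of $\tau(X)$ in \Cref{thm:uniqueTfication}, and by uniqueness $Y' = \tau(X)$. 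Setting $F_Y := \tau(F_X)$ therefore completes the proof.

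The main subtlety I expect is precisely the identification $Y' = \tau(X)$: one must not conflate the two appearances of $\tau$ (as a map on factorizations versus as a map on tangles defined by polarity and minimum crossings). The cleanest way to bridge them is to invoke \Cref{thm:tauPreservesPolarity} and \Cref{thm:uniqueTfication} in sequence, as above; everything else is just bookkeeping about the order of prime indices, which is preserved by $\tau$ by its very definition.
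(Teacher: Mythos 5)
Your proposal is correct and follows essentially the same route as the paper: take $F_Y := \tau(F_X)$, note the indices and their order are preserved by the very definition of $\tau$, get minimality from \Cref{thm:tauMinimal}, and identify the composed tangle with $\tau(X)$. The paper compresses the last identification into the single remark that ``$\tau(X)$ is the tangle we obtain by composing the factors in $\tau(F_X)$,'' whereas you justify it explicitly via \Cref{thm:tauPreservesPolarity} and \Cref{thm:uniqueTfication} --- a welcome elaboration, but not a different argument.
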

\begin{proof}
    Since $\tau$ does not change the indices, then this is a direct implication of the fact that $\tau(X)$ is the tangle we obtain by composing the factors in $\tau(F_X)$. This factorization can be found by using \Cref{algo:bubbleSort}.
\end{proof}

\begin{figure}
    \centering
    \subfloat[]{\includegraphics[]{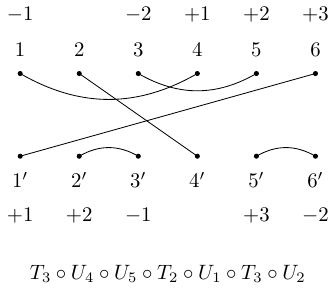}}\hspace{2cm}
    \subfloat[]{\includegraphics[]{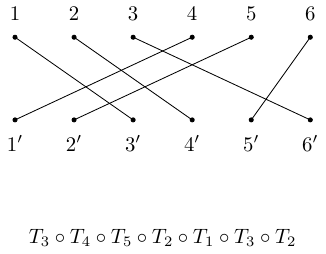}}
    \caption{(a) A tangle $X$ for which we have computed the node polarities along with one of its minimal factorizations. Note that the edge $(2,4')$ is not considered because $\#T(2,4') \geq |(2,4')|$ and therefore passes through only $T$-primes and it is not affected by $\tau$, therefore it is present in both $X$ and $\tau(X)$. (b) The tangle corresponding to $\tau(X)$ and one of it minimal factorizations. To compute it, add the edge $(2,4')$ and connect from top to bottom the nodes that in $X$ have the same node polarity label. Note how the indices for both factorizations coincide.}
    \label{fig:polarity_tau_example}
\end{figure}

\begin{algorithm}
    \caption{Compute $\tau(X)$}
    \begin{algorithmic}
    \Require{$X \in \B{N}$}
        \Function{$\tau$}{$X$}
            \State Let $\rho(i) \in \{+,-\}$ be the polarity for node $i$ in $X$
            \State {$S \gets $ all nodes from $1$ to $N$ connected to edge $e$ s.t. $\#T(e) < |e|$}
            \State {$S' \gets $ all nodes from $1'$ to $N'$ connected to edge $e$ s.t. $\#T(e) < |e|$}
            \State {For $i \in S$, label with ``$\rho(i)j$'' the $j$th node with polarity $\rho(i)$}
            \State {For $i' \in S'$, label with ``$\rho(i')j$'' the $j$th node with polarity $\rho(i')$}
            \State {$Z \gets $ empty tangle in $\B{N}$}
            \State {Add to $Z$ all edges in $X$ s.t. $\#T(e) \geq |e|$}
            \For {nodes $x,y'$ s.t. $x$ and $y'$ have the same node polarity label in $X$}
                \State {Add to $Z$ the edge $(x,y')$}
            \EndFor
            \State \textbf{return} $Z$
        \EndFunction
    \end{algorithmic}
    \label{algo:tauAlgo}
\end{algorithm}

\section{Length function} \label{sec:length_functions}

We are now ready to define two length functions for the Brauer monoid. We will use different subscripts to differentiate between them. The first one, $\ell_P$, is trivially $\ell_{\tau}(X) = \ell(\tau(X))$, where $\ell(\tau(X))$ is just the number of crossings for $\tau(X)$. This function has quadratic complexity because it has to calculate the node polarity first. But even if node polarity could be computed faster, \Cref{thm:TauIndices} tells us that we can obtain the indices for a factorization of $X$ by factorizing $\tau(X)$ (which is very useful information, as we will see in \Cref{sec:finalAlgo}), and factorizing a tangle in $S_N$ cannot be done faster than $\BigO{N^2}$ (see \Cref{sec:app_fact}).

The second length function is a direct corollary of the fact that $\#T(e) \geq |e| \implies \#U(e) = 0$ that we proved in \Cref{sec:passThrough}, because it implies that $\#U(e) > 0 \implies \#T(e) < |e|$ and therefore $\#U(e) > 0 \implies \#T(e) + \#U(e) = |e|$. Let's define the function $\#P(e)$ to be the number of prime tangles the edge $e$ passes through in a factorization $F$.

Since if $\#T(e) \geq |e|$ then $e$ passes through only $T$-primes, and otherwise we have that $\#T(e) + \#U(e) = |e|$, we can define $\#P(e)$ to be:
\begin{equation*}
    \#P(e) = \left\{ \begin{array}{cl}
\#T(e) & : \ \#T(e) \geq |e| \\
|e| & : \ otherwise
\end{array} \right. = max(\#T(e), |e|)
\end{equation*}

Remember that $\#T(e)$ is just the number of crossings edge $e$ has (\Cref{ass:T_are_crossings}). Since now both $\#T(e)$ and $|e|$ are values that are independent of the factorization considered, so is $\#P(e)$. This allows us to use $\#P(e)$ to find a length function for the Brauer monoid by just summing crossings and edge sizes. It is defined as follows:
\begin{equation*}
    \ell_{P}(X) = \frac{1}{2}\sum_{e \in X} \#P(e)
\end{equation*}

Since $\#P(e)$ counts the number of primes $e$ passes through, we have that the sum will count every prime twice. We then have to divide by two to obtain the minimal number of primes for the tangle $X$.

This function still has a quadratic time complexity, but if we assume we already have calculated $\#T(e)$ for all $e \in X$, then it can be computed in linear time (see \Cref{sec:finalAlgo}). We will use this trick in the following Section to bring down the time complexity from $\BigO{N^5}$ to $\BigO{N^4}$.

\section{Factorization algorithm} \label{sec:finalAlgo}

As we have seen in the previous Section, every length function has to be computed in quadratic time, therefore by \Cref{cor:naiveAlgoTimeComplexity} we have that \Cref{algo:naiveFactorization} runs in $\BigO{N^5}$. It turns out however that we can do better. Instead of calculating $\#T(e)$ every time we have to compute $\ell_{P}$, we can store the number of crossings for each edge at the beginning, and then we can just update them every time we merge a lower hook or compose the tangle with a $T$-prime. In this way, updating will have linear time complexity and therefore $\ell_{P}$ will be linear, which brings the overall time complexity to $\BigO{N^4}$.

We update $\#T(e)$ in different ways depending on if we composed with a $T$-prime or merged two edges. In the first case, composing with $T_i$ will modify just two edges, i.e. the ones connected to $i$ and $i+1$. In this case, we just decrease $\#T$ by one for each of them.

In the second case, let's say we merged the lower hook $h$ of size one with the edge $e$. Since we are going to remove them from $X$, we need to decrease $\#T$ for all edges that cross with $e$ (no edge will cross with $h$). Then, after merging $h$ with $e$, two new edges will be in $X$, let's call them $e_1$ and $e_2$. At this point we iterate again through all edges of $X$, and if another edge $d$ crosses with $e_1$, then we increase $\#T(d)$ and $\#T(e_1)$, we then do the same for $e_2$.

In both cases updating $\#T$ takes at most linear time, therefore $\ell_{P}$ is computable in linear time and we have a $\BigO{N^4}$ factorization algorithm, we just have to compute $\#T$ once at the beginning (\Cref{algo:n4_fact}). See \Cref{fig:factorization_example} for an example. A Python implementation can be found at \url{https://github.com/DanieleMarchei/BrauerMonoidFactorization}.

\begin{algorithm}
    \caption{$\mathcal{O}(N^4)$ factorization}
    \begin{algorithmic}
    \Require{$X \in \B{N}$}
        \Function{Factorize $\B{N}$}{$X$}
        \State {Calculate $\#T(e)$ for all $e \in X$}
        \State {$I \gets $ factorization indices of $\tau(X)$ } \Comment{\Cref{algo:tauAlgo} and \Cref{algo:bubbleSort}}
        \State {$F \gets $ empty list}
        \For{$i \in I$}
            \State {$h \gets (i,i+1)$}
            \If {$h \in X$}
                \State {$l \gets \ell_{P}(X)$}
                \For {$e \in X : e \neq h, \#T(e) < |e|$}
                    \State {$X' \gets X$}
                    \For {$d \in X' : d \neq h,e$}
                        \If {$d$ crosses $e$}
                            \State {$\#T(d) \gets \#T(d) - 1$}
                        \EndIf
                    \EndFor
                    \State {Merge $e$ and $h$ in $X'$, creating edges $e_1$ and $e_2$}
                    \For {$d \in X' : d \neq e_1$}
                        \If {$d$ crosses $e_1$}
                            \State {$\#T(e_1) \gets \#T(e_1) + 1$}
                            \State {$\#T(d) \gets \#T(d) + 1$}
                        \EndIf
                    \EndFor
                    \For {$d \in X' : d \neq e_2$}
                        \If {$d$ crosses $e_2$}
                            \State {$\#T(e_2) \gets \#T(e_2) + 1$}
                            \State {$\#T(d) \gets \#T(d) + 1$}
                        \EndIf
                    \EndFor
                    \If {$\ell_{P}(X') = l-1$}
                        \State {Append $U_i$ to $F$}
                        \State {$X \gets X'$}      
                        \State \textbf{break}
                    \EndIf
                \EndFor
            \Else
                \State{$X \gets T_i \circ X$}
                \State {Append $T_i$ to $F$}
                \State {Decrease the number of crossings for the two edges connected at $i$ and $i+1$}
            \EndIf
        \EndFor
        \State {\textbf{return} $F$}
        \EndFunction
    \end{algorithmic}
    \label{algo:n4_fact}
\end{algorithm}

\Cref{algo:n4_fact} could be altered to output a factorization that minimizes the number of $T$-primes without affecting the time complexity by just keeping track of the tangle with the least amount of crossings when merging $e$ and $h$. However, the nested for-loops that search for the edge to merge $h$ with are still the bottleneck of the algorithm. One way to bring down the complexity to $\BigO{N^3}$ could be finding a constant time decision algorithm that determines if a particular edge passes through $U_i$. In this way, finding the edges that can be merged would take linear time and hence reach $\BigO{N^3}$. We were not able to find such an algorithm, so we leave it as a future research direction.

\begin{figure}
    \centering
    \includegraphics[width = \textwidth]{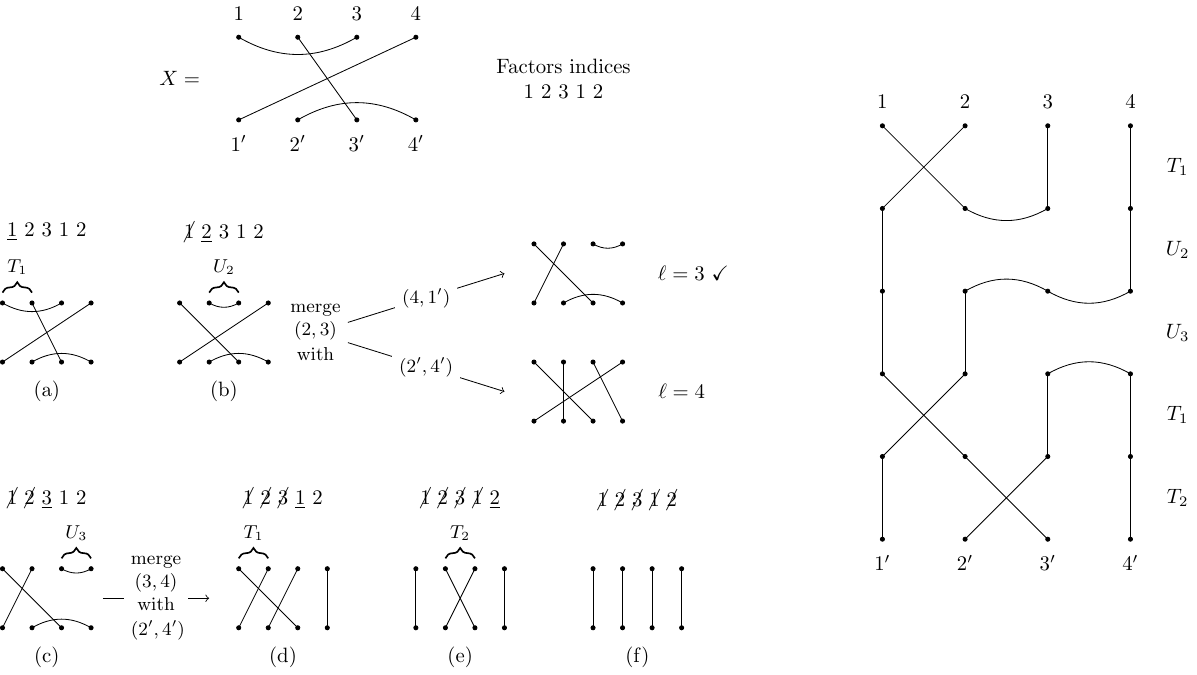}
    \caption{High level illustration for how \Cref{algo:n4_fact} works. On the left, we have a tangle $X$ from which we have extracted its factorization indices. (a) The first factor index is 1, therefore we look at upper nodes 1 and 2 of our tangle and see that they are not connected.  We record $T_1$ and compute $T_1 \circ X$ for the second step. (b) The second index is 2 and the upper nodes 2 and 3 are connected, therefore we record $U_2$. For the next step, we have to decide which edge we have to merge $(2,3)$ with. We have two options: $(4,1')$ and $(2',4')$ (edge $(1,3')$ satisfies $\#T(e) \geq |e|$ so it does not pass through a $U$-prime). If we merge it with $(4,1')$ we obtain a tangle with only three factors, while the other will have four, therefore we select the first one for the next step. (c) We are in the same situation of step (b) but now we can only merge $(3,4)$ with $(2',4')$ and we record $U_3$. (d) Same situation of step (a), we record $T_1$. (e) The factor index is 2 and the upper nodes 2 and 3 are not connected, therefore we record $T_2$. (f) We have reached the identity tangle so we stop the algorithm and output the factorization $T_1 \circ U_2 \circ U_3 \circ T_1 \circ T_2$, which is minimal. On the right we have drawn the factors we have found. It is easy to check that indeed they compose the original tangle $X$. }
    \label{fig:factorization_example}
\end{figure}

\section{Discussion} \label{sec:conclusions}

The Brauer monoid can be factorized in polynomial time, specifically, with a time complexity of $\BigO{N^4}$. To our knowledge, this is the first polynomial time algorithm proposed to solve this problem. We are not sure if it has an optimal running time, since there might be some room for improvements we leave as further research directions. In parallel, we also found two length functions that can be computed in quadratic time, one of which can be computed in linear time assuming the crossing numbers are known. 

Some proofs for this paper rely on two assumptions we were not able to prove nor find in the literature, but their validity has been empirically checked (see \Cref{sec:testAssumptions}). We leave their proof as another future research problem.

The factorization problem could also lead to some interesting combinatorial problems. For example, what is the maximum amount of edges we can merge for a tangle $X$ with a hook $h$ of size one, such that $\ell(X) = \ell(X') + 1$, where $X'$ is the tangle obtained after the merge? This is important to ask because, as we already discussed, finding the right edge to be merged is the bottleneck of \Cref{algo:n4_fact}. By means of enumeration, we obtained \Cref{tab:maxMerges}, and it seems the case that the above questions is answered by $\lfloor\frac{N}{2}\rfloor$, while the number of tangles that have that number of merges is much more difficult to count. For example, the even entries match with the A132911\footnote{\url{https://oeis.org/A132911}} sequence of the OEIS~\cite{sloane2024line}, here we call it $C(k)$:
\begin{equation*}
    C(k) = (k+1)\frac{(2k)!}{2^k}
\end{equation*}
where $k$ starts from zero. To obtain an exact match with the even entries of \Cref{tab:maxMerges}, we will call them $B(2k)$, we have to modify it as follows:
\begin{equation*}
    B(2k) = C(k-1) = k!|\B{k-1}|
\end{equation*}
where $k$ starts from one. 

\begin{table}[H]
    \centering
    \begin{tabular}[c]{|l|ll}
        \hline
        $N$ & max amount of merges & n. tangles $B(N)$ \\
        \hline
        2 & 1 & 1 \\
        3 & 1 & 6 \\
        4 & 2 & 2 \\
        5 & 2 & 46 \\
        6 & 3 & 18 \\
        7 & 3 & 900 \\
        8 & 4 & 360 \\
        9 & 4 & 31320 \\
        10& 5 & 12600 \\ 
    \end{tabular}
    \caption[The maximum amount of possible merges in $\B{N}$.]{The maximum amount of possible merges in $\B{N}$ and the number of tangles $X$ with a hook $h$ of size one that can be merged with other edges such that $\ell(X) = \ell(X') + 1$, where $X'$ is the tangle obtained after the merge.}
    \label{tab:maxMerges}
\end{table}

Another question could be: how many tangles have length $k$ in $\B{N}$? Using the results presented, we enumerated all tangles up to $\B{10}$ and obtained \Cref{tab:NumTangleWithFactors}, let's call it $T(N,k)$. Some clear pattern emerge, for example $T(N,1) = 2(N-1)$ (as expected),  or $T(N,1) = T(N-2, \frac{N(N-1)}{2}-1)$ for $N \geq 5$, but we were unable to find a general formula. 

We don't have a proof for any of the above statements, nor we have a candidate formula for the odd entries. We leave these questions open as a further research direction.

\begin{table}[H]
    \small
    \begin{tabular}{|l|llllllllll}
    \hline
    \theadfont\diagbox[width = 3em]{$k$}{$N$} & 1 & 2 & 3 & 4 & 5 & 6 & 7 & 8 & 9 & 10 \\
    \hline
    0 & 1 & 1 & 1 & 1 & 1 & 1 & 1 & 1 & 1 & 1 \\
    1 &  & 2 & 4 & 6 & 8 & 10 & 12 & 14 & 16 & 18 \\
    2 &  &  & 8 & 20 & 36 & 56 & 80 & 108 & 140 & 176 \\
    3 &  &  & 2 & 36 & 102 & 208 & 362 & 572 & 846 & 1192 \\
    4 &  &  &  & 30 & 196 & 562 & 1224 & 2294 & 3900 & 6186 \\
    5 &  &  &  & 10 & 228 & 1110 & 3192 & 7266 & 14380 & 25870 \\
    6 &  &  &  & 2 & 212 & 1650 & 6620 & 18746 & 43764 & 90034 \\
    7 &  &  &  &  & 106 & 1966 & 11090 & 40166 & 112250 & 266462 \\
    8 &  &  &  &  & 42 & 1914 & 15890 & 73278 & 247494 & 682770 \\
    9 &  &  &  &  & 12 & 1440 & 19442 & 116996 & 477830 & 1538840 \\
    10 &  &  &  &  & 2 & 830 & 20910 & 166400 & 825422 & 3100160 \\
    11 &  &  &  &  &  & 414 & 18798 & 212250 & 1291638 & 5667090 \\
    12 &  &  &  &  &  & 162 & 15402 & 244730 & 1853554 & 9514646 \\
    13 &  &  &  &  &  & 56 & 10174 & 255188 & 2448214 & 14804426 \\
    14 &  &  &  &  &  & 14 & 6154 & 240828 & 3003652 & 21502064 \\
    15 &  &  &  &  &  & 2 & 3282 & 207968 & 3411904 & 29298972 \\
    16 &  &  &  &  &  &  & 1530 & 161844 & 3627806 & 37604566 \\
    17 &  &  &  &  &  &  & 648 & 113490 & 3585522 & 45596280 \\
    18 &  &  &  &  &  &  & 234 & 73978 & 3325568 & 52372154 \\
    19 &  &  &  &  &  &  & 72 & 44336 & 2856302 & 57069858 \\
    20 &  &  &  &  &  &  & 16 & 24354 & 2325126 & 59057576 \\
    21 &  &  &  &  &  &  & 2 & 12462 & 1741684 & 58153920 \\
    22 &  &  &  &  &  &  &  & 5848 & 1238988 & 54397782 \\
    23 &  &  &  &  &  &  &  & 2502 & 830378 & 48420890 \\
    24 &  &  &  &  &  &  &  & 972 & 523782 & 41150508 \\
    25 &  &  &  &  &  &  &  & 324 & 312886 & 33243338 \\
    26 &  &  &  &  &  &  &  & 90 & 176806 & 25585214 \\
    27 &  &  &  &  &  &  &  & 18 & 94362 & 18883774 \\
    28 &  &  &  &  &  &  &  & 2 & 47280 & 13337554 \\
    29 &  &  &  &  &  &  &  &  & 22294 & 9028454 \\
    30 &  &  &  &  &  &  &  &  & 9756 & 5856940 \\
    31 &  &  &  &  &  &  &  &  & 3908 & 3653772 \\
    32 &  &  &  &  &  &  &  &  & 1406 & 2186074 \\
    33 &  &  &  &  &  &  &  &  & 434 & 1253770 \\
    34 &  &  &  &  &  &  &  &  & 110 & 688446 \\
    35 &  &  &  &  &  &  &  &  & 20 & 361372 \\
    36 &  &  &  &  &  &  &  &  & 2 & 180488 \\
    37 &  &  &  &  &  &  &  &  &  & 85298 \\
    38 &  &  &  &  &  &  &  &  &  & 37930 \\
    39 &  &  &  &  &  &  &  &  &  & 15636 \\
    40 &  &  &  &  &  &  &  &  &  & 5880 \\
    41 &  &  &  &  &  &  &  &  &  & 1972 \\
    42 &  &  &  &  &  &  &  &  &  & 566 \\
    43 &  &  &  &  &  &  &  &  &  & 132 \\
    44 &  &  &  &  &  &  &  &  &  & 22 \\
    45 &  &  &  &  &  &  &  &  &  & 2
    \end{tabular}
    \caption{The number of tangles in $\B{N}$ with length $k$.}
    \label{tab:NumTangleWithFactors}
    \end{table}

\section*{Acknowledgements}
We would like to thank James East, Matthias Fresacher, Alfilgen Sebandal and Azeef Parayil Ajmal for their invaluable discussions and suggestions. We would also like to thank James Mitchell for helping us with the time complexity for Algorithm 13 of ``Computing finite semigroups''.

\appendix
\section{Factorization algorithms for submonoids of $\B{N}$} \label{sec:app_fact}

\subsection{Factorization of $S_N$}

Every element in $S_N$ can be uniquely described as a permutation of the string $1,2,\cdots,N$. A simple isomorphism from a string permutation $s_1, s_2, \cdots, s_N$ and a tangle in $S_N$ is to connect the upper node $i$ to the lower node $s_i'$. The identity permutation string is therefore the one in which every element is in ascending order. In this context, factorizing a tangle in $S_N$ is the same as finding the shortest sequence of adjacent transpositions (i.e. $s_{i}, s_{i+1} \rightarrow s_{i+1}, s_i$) such that the original string is reduced to the identity string. In other words, we have to sort the string. This is a well-known problem in Computer Science and there are numerous fast algorithms for solving it. However, due to the constraint of using only adjacent transpositions, we are bound to a quadratic time complexity since the longest minimal factorization in $S_N$ has size $\frac{N(N-1)}{2}$. One such algorithm is the \textsc{BubbleSort} \cite{knuth1998theart}, which specifically sorts strings using adjacent transpositions, yielding the shortest possible sequence (see \Cref{algo:bubbleSort}).

\begin{algorithm}
    \caption{Factorization algorithm for any tangle in $S_N$.}
    \begin{algorithmic}
    \Require $X \in S_N$
        \Function{Factorize $S_N$}{$X$}
            \State {$s \gets $ string permutation for $X$}
            \State {$F \gets$ empty list}
            \For {$j \in [N \dots 1]$}
                \For {$i \in [1 \dots j-1]$}
                    \If {$s_i > s_{i+1}$}
                        \State {Swap $s_i$ and $s_{i+1}$}
                        \State {Append $T_i$ to $F$}
                    \EndIf
                \EndFor
            \EndFor
            \State \textbf{return} $F$
        \EndFunction
    \end{algorithmic}
    \label{algo:bubbleSort}
\end{algorithm}

\subsection{Factorization of $\TL_N$}
The Temperley-Lieb monoid $\TL_N$ \cite{TL-original} is a submonoid of $\B{N}$ in which only $U$-primes are taken as generators. Informally speaking, $\TL_N$ contains all tangles with no crossings. A $\Omeg{N^2}$ factorization algorithm was first proposed by Ernst et. al ~\cite{ernst2016factorization} and in this Section we will give a surface-level explanation of how it works (see \Cref{fig:tlAlgo}).

\begin{figure}
    \centering
    \subfloat[]{\includegraphics[width=.33\textwidth]{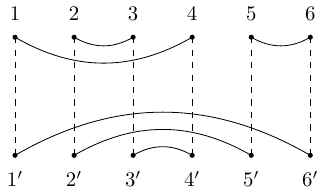}}\hfill
    \subfloat[]{\includegraphics[width=.33\textwidth]{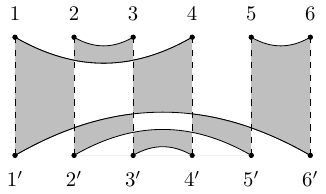}}\hfill
    \subfloat[]{\includegraphics[width=.33\textwidth]{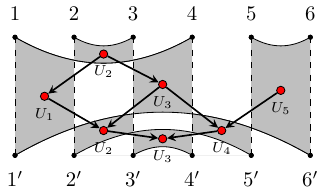}}
    \caption{(a) A tangle in $\TL_N$. Drawing $N$ imaginary edges $(i,i')$ we see that the tangle is now divided into $N-1$ columns and each column is divided into regions, delimited by edges. (b) Select the regions with odd depth. (c) If two regions are diagonally adjacent, connect the top one with the bottom one, thus obtaining a Directed Acyclic Graph. Label each node as $U_i$, where $i$ is the index of the columns it is in. If we read this DAG from top to bottom and from left to right we obtain the minimal factorization $U_2 \circ U_5 \circ U_1 \circ U_3 \circ U_2 \circ U_4 \circ U_3$.}
    \label{fig:tlAlgo}
\end{figure}

Given a tangle $X$ in $\TL_N$, divide it into $N-1$ columns by adding a vertical line for each pair $(i,i')$. Each column now will be further divided into different regions delimited by the edges it contains. Each region has a depth value indicated by how many other regions there are above it. We will call regions with even depth 0-regions and regions with odd depth 1-regions. Two regions, $R_1$ and $R_2$, in the same column are vertically adjacent if $|depth(R_1) - depth(R_2)| = 1$. Given two regions $R_1$ and $R_2$ in adjacent columns and two points $p_1 \in R_1$ and $p_2 \in R_2$, if we can draw a straight line between them without crossing an edge in $X$, then we say that $R_1$ and $R_2$ are horizontally adjacent. If there is a region $R'$ vertically adjacent to $R_1$ and horizontally adjacent to $R_2$, then $R_1$ and $R_2$ are diagonally adjacent. We will $R_1 \rightarrow R_2$ in the special case where $R'$ is below $R_1$.

Let $\mathfrak{R}$ be the set of all 1-regions. From here we construct a Directed Acyclic Graph (DAG) $G$ such that every region $R \in \mathfrak{R}$ is a vertex of $G$ and given two regions $R_1$ and $R_2$, if $R_1 \rightarrow R_2$, then $(R_1, R_2)$ is an edge of $G$. If a vertex does not have incoming edges, then we call it a root of $G$. Finally, to obtain the factorization for $X$ we traverse each vertex from \textit{top to bottom} and from \textit{left to right}, i.e. we list all roots $r_i, r_j, \dots, r_k$ of $G$, store $U_i \circ U_j \circ \cdots \circ U_k$ and delete the roots, now other nodes will not have incoming edges, list them as the new roots $r_i', r_j', \dots, r_k'$, and so on until $G$ is empty (\Cref{algo:TLAlgorithm}).

\begin{algorithm}
\caption{Factorizes tangle in $\TL_N$}
\begin{algorithmic}
\Procedure{Factorize $\TL$}{$X$}
    \State{Divide $X$ in columns}
    \State{$R \gets $ regions of $X$}
    \State{$\mathfrak{R} \gets $ 1-regions of $R$}
    \State{$G \gets $ DAG from $\mathfrak{R}$ s.t. if $R_1 \rightarrow R_2$, then $(R_1, R_2) \in G$}
    \State{$F \gets $ empty list}
    \While{$G \neq \emptyset$}
        \State {$S \gets $ roots of $G$}
        \For{$s \in S$}
            \State {$i \gets$ column where $s$ lies in}
            \State {Append $U_i$ to $F$}
        \EndFor
        \State {Remove nodes in $S$ from $G$}
    \EndWhile
    \State \textbf{return} $F$
\EndProcedure
\end{algorithmic}
\label{algo:TLAlgorithm}
\end{algorithm}

The time complexity of this algorithm is bounded below by $\Omeg{N^2}$ because the number of 1-regions is the same as the number of nodes in $G$, which is equal to the length of the factorization for the input tangle and, by \Cref{cor:longestMinimalFact}, we know it is quadratic. Assuming that retrieving the roots of $G$ can be done in constant time, the while loop basically enumerates the nodes of $G$, therefore it does not influence the time complexity. If we assume that the number of total regions is still quadratic and they could be enumerated in quadratic time as well, we have that \Cref{algo:TLAlgorithm} has a time complexity of $\BigO{N^2}$.

\section{Testing the Assumptions} \label{sec:testAssumptions}

Here we present how methodology for empirically testing \Cref{ass:reductionInBN} and \Cref{ass:T_are_crossings}. The interested reader can find the Python code in the GitHub repository \url{https://github.com/DanieleMarchei/BrauerMonoidFactorization}. 

\subsection{Preparation}
To test the two assumptions we need a database of minimal factorizations. We cannot use \Cref{algo:n4_fact}, as it would be circular reasoning. This is because its correctness relies on them being true (it is used in \Cref{sec:passThrough} to partially apply $\tau$ and in \Cref{sec:length_functions} to argue that $\ell_P$ is independent of the factorization). Thus we decided to create this database by exploring the right Cayley Graph of $\B{N}$ using a Breadth First Search (BFS)\footnote{According to \cite{erickson2023algorithms}, the BFS algorithm was first proposed in \cite{zuse1946Plankalkul} and not in \cite{moore1959shortest}, as it is often attributed.} starting from the identity tangle. The BFS algorithm has the nice property of always returning the shortest path in a graph (which will be a minimal factorization in our case) and by instructing it to first explore the edges labelled with a $U$-prime, and then all edges labelled with a $T$-prime, every time we reach an unexplored tangle we know we have obtained a minimal factorization that uses the least amount of $T$-primes. We ran this procedure up to $\B{8}$.

\subsection{Assumption 1} \label{sec:testAssumption1}

\Cref{ass:reductionInBN} stated that every factorization in the Brauer monoid can be reduced to a minimal one by a sequence of ``delete'', ``braid'' and ``swap'' rules. In other words, we do not need to increase the factorization length in order to find a shorter one.

To test this empirically, we implemented the axioms for $\B{N}$ (\Cref{tab:axioms}) as a Term Rewriting System (TRS) using the Maude System \cite{clavel2022maude}. The approach is similar to what described in \cite{marchei2022rna}, but to have this paper self-contained, we will give a high-level illustration of the procedure.

The TRS accepts in input a factorization and iteratively tries to apply as many ``delete'' rules it can. Then, it tries to apply ``braid'' and ``swap'' rules non-deterministically until a new delete rule is applicable, and at this point the TRS starts again with the newly shortened factorization. This reduction procedure stops when no delete rule is applicable after all move rules have been applied.

The actual test for the assumption is performed as follows:
\begin{enumerate}
    \item generate a random factorization $F$ with length $|F| \in [2, s\frac{N(N-1)}{2}]$ and compute the corresponding tangle $X \in \B{N}$;
    \begin{itemize}
        \item if the factorization for $X$ stored in the database has the same length of $F$, then repeat step 1. We only want to test non-minimal factorizations 
    \end{itemize}
    \item get the minimal factorization $F^*$ for $X$ in the database;
    \item reduce $F$ using the TRS, obtaining $\hat{F}$;
    \item if $|\hat{F}| \neq |F^*|$ we have found a counterexample, otherwise repeat from step 1.
\end{enumerate}
where $s$ is a scale factor that parametrizes the maximum length possible that can be generated. To avoid spending too much time on this procedure, we keep track of the tangles generated, effectively testing the assumption once for each tangle. Since the probability of generating a tangle we haven't already generated decreases each time we find a new one, we implemented a ``patience'' counter that is decreased each time we do not generate a new tangle. The procedure stops when the patience reaches to zero or all tangles are tested. In our test we set $s = 2$ and the patience counter to $2000000$.
Because this procedure takes a long time to terminate, we ran it up to $N = 6$ with the results shown in \Cref{tab:ass1test}. We never found a counterexample.
\begin{table}[H]
    \centering
    \begin{tabular}{c|c|c|c|c|c}
         $N$ & 2 & 3 & 4 & 5 & 6 \\
         \hline
         $|\B{N}|$ & 3 & 15 & 105 & 945 & 10395 \\
         \hline
         n. tangles tested & 3 & 15 & 105 & 942 & 10043 \\
    \end{tabular}
    \caption{Test table for \Cref{ass:reductionInBN}. We ran the test up to $\B{6}$ and, among all tangles tested, we never found a counterexample.}
    \label{tab:ass1test}
\end{table}

\subsection{Assumption 2} \label{sec:testAssumption2}

\Cref{ass:T_are_crossings} stated that if a tangle $X$ has $k$ crossings, then there exists a minimal factorization $F$ with exactly $k$ $T$-primes and no other factorization with fewer $T$-primes exists. 

This assumption is easier and faster to check:
\begin{enumerate}
    \item pick a tangle $X$ with minimal factorization $F$ from the database;
    \begin{itemize}
        \item by construction, $F$ will have the minimal amount of $T$-primes.
    \end{itemize}
    \item count the number of crossings $c$ in $X$;
    \item if $c \neq \#T(F)$ then we have found a counterexample, otherwise repeat from step 1.
\end{enumerate}

We ran this procedure up to $N = 8$ and never found a counterexample.

\end{document}